\documentclass[11pt]{article}
\usepackage{amsmath,amssymb,algorithm,algpseudocode,graphicx}
\usepackage{hyperref}
\usepackage{amsmath}
\usepackage{amsthm}
\numberwithin{equation}{section}
\newtheorem{theorem}{Theorem}[section]
\newtheorem{corollary}{Corollary}[section]
\newtheorem{proposition}{Proposition}[section]
\newtheorem{assumption}{Assumption}
\newtheorem{lemma}{Lemma}[section]

\newcommand{\argmax}{\operatorname{argmax}}
\usepackage{geometry}
\usepackage{booktabs}
\usepackage{siunitx}
\usepackage{subcaption}
\usepackage{multirow}

\begin{document}

\title{A Tree-Based Localized Reduced Basis Method for Parametrized Parabolic PDEs
\thanks{ \textbf{Funding: }This research is partially supported  by the NSERC Discovery Grant RGPIN-2021-04311}}

\author{Mohamed Barakat\thanks{Department of Mathematics and Statistics, University of Ottawa, Ottawa, ON, K1N 6N5}
\ and Diane Guignard\footnotemark[2]}
\date{}

\maketitle
\begin{abstract}
    We propose a localized reduced basis method for parametrized parabolic partial differential equations based on an adaptive partitioning of the parameter domain using hierarchical binary trees. A POD-Greedy procedure is employed to construct reduced spaces associated with each parameter subdomain. A certified \emph{a posteriori} error estimator is derived from an inf-sup stability analysis and shown to be equivalent to the underlying approximation error. This equivalence, together with suitable regularity assumptions on the parameter-to-solution map, enables a rigorous convergence analysis and is used to derive a strategy for selecting the number of POD modes to retain in each local reduced space. Numerical experiments for a parametrized convection--diffusion problem demonstrate the effectiveness of the proposed approach and confirm the predicted convergence behavior.
\end{abstract}

\section{Introduction}
Reduced order modeling has become a central tool in the numerical approximation of parametriz-ed partial differential equations (PDEs). Among the various approaches, the reduced basis (RB) method, introduced in \cite{almroth1978automatic,noor1980reduced} and further analyzed in \cite{porsching1985estimation,rheinboldt1992theory}, has emerged as one of the most successful techniques for constructing efficient low-dimensional surrogates of parametrized models. RB methods are particularly well suited for applications requiring repeated numerical simulations, such as optimization, uncertainty quantification, inverse problems, parameter estimation and real-time control, where the governing PDE must be solved many times for different parameter values under limited computational resources \cite{boyaval2008reduced,boyaval2009reduced,nguyen2010reduced}. Comprehensive treatments of the methodology, together with its theoretical foundations and computational aspects, can be found in \cite{quarteroni2011certified,hesthaven2016certified}.

In general, the RB methodology relies on an offline-online decomposition. During the offline stage, a low-dimensional approximation space is constructed from a collection of high-fidelity solutions, referred to as snapshots, computed for carefully selected parameter values. These snapshots capture the dominant features of the solution manifold and form the basis of an efficient reduced-order model. During the online stage, the reduced solution is obtained by projecting the governing equations onto the reduced space, yielding a system whose dimension is independent of the underlying high-fidelity discretization \cite{rozza2008reduced}.

For time-dependent problems, the construction of efficient reduced spaces is more challenging since the solution depends simultaneously on the parameter and the temporal variable. Consequently, the reduced basis must accurately represent both the parameter dependence and the temporal evolution of the solution. Several approaches have been proposed to address this issue. A straightforward strategy consists of treating every time instance as an independent snapshot and applying a greedy algorithm directly to the resulting space-time solution manifold. Although conceptually simple, this approach may stagnate and fail to achieve a high level of approximation efficiency \cite{grepl2005posteriori}. A more efficient and widely adopted strategy is the Proper Orthogonal Decomposition (POD)-Greedy algorithm, in which POD is first employed to compress the temporal trajectories associated with a selected parameter value before a greedy procedure determines the next parameter according to a certified error estimator \cite{haasdonk2008reduced, hesthaven2016certified}. An important ingredient of the development of certified reduced basis methods for parabolic PDEs is an efficient error estimator. Residual-based {a posteriori} error estimators for parametrized parabolic problems have been developed in
\cite{grepl2005posteriori,knezevic2011reduced,nguyen2009reduced}.
These estimators provide rigorous upper bounds for the $L^2$ error at individual time steps. However, they do not explicitly account for the contribution of the discrete time derivative error, and no corresponding lower bound establishing the efficiency of the estimator with respect to this norm is provided.

In many applications, the parameter-to-solution map is sufficiently smooth that a single global reduced space provides an accurate approximation of the entire solution manifold, often yielding exponential convergence with respect to the reduced basis dimension \cite{maday2002priori,buffa2012priori}. This favorable behavior, however, relies on the assumption that the solution manifold can be well approximated by a low-dimensional linear space. While many diffusion-dominated elliptic and parabolic problems satisfy this property, the linear approximation paradigm becomes ineffective when the solution manifold exhibits strong nonlinear features, such as large curvature or moving coherent structures through the spatial domain. Such behaviors commonly arise in transport-dominated flows, wave propagation, and nonlinear evolution equations, where the Kolmogorov $N$-width decays slowly. Consequently, a global reduced basis requires a large number of basis functions to accurately capture all relevant solution features, leading to increased online computational cost. Both theoretical analyses and numerous numerical studies have demonstrated this limitation of linear reduced-order models, often referred to as the Kolmogorov barrier, thereby motivating the development of nonlinear model reduction techniques \cite{ohlberger2013reduced,ohlberger2016nonlinear,greif2019tensor,peherstorfer2022nonlinear,buffa2012priori}.

To address the limitations of linear model reduction, several nonlinear model reduction techniques have been developed \cite{hesthaven2026nonlinear}. A widely adopted strategy is localized model reduction, where the solution manifold is approximated by a collection of low-dimensional linear spaces rather than a single global space, together with an online mechanism that identifies the appropriate local approximation. Depending on how the local reduced spaces are constructed, localized methods can be broadly classified into several categories. One approach consists of partitioning the parameter domain into subregions, each associated with its own local reduced basis, allowing different approximation spaces to capture distinct parameter regimes \cite{haasdonk2008adaptive,eftang2010hp,haasdonk2011training,eftang2011hp2,guignard2024tree}. Another class of methods partitions the temporal domain into successive time windows, constructing dedicated reduced spaces for each interval to better represent transient dynamics \cite{dihlmann2011model,drohmann2011adaptive,shimizu2021windowed,copeland2022reduced}. Alternatively, localization can be performed directly on the solution manifold by grouping similar snapshots into clusters, typically through techniques such as $k$-means, and constructing a local reduced basis for each cluster \cite{amsallem2012nonlinear,washabaugh2012nonlinear,peherstorfer2014localized,amsallem2015fast,amsallem2016pebl}. In the present work, we focus on the first class of methods, namely parameter-domain partitioning based on hierarchical binary trees, owing to their adaptive offline construction, efficient online search, and amenability to rigorous convergence analysis.

In this article, we extend the tree-based localized reduced basis framework of \cite{barakat2026convergence}, originally developed for parametrized elliptic PDEs, to parametrized parabolic problems. The adaptive partitioning strategy introduced in \cite{barakat2026convergence} is retained, whereby the parameter domain is recursively subdivided into a hierarchy of subdomains, each associated with a local reduced basis. The construction of the local RB spaces is based on a POD/greedy algorithm of a prescribed number of parameters. Although the parameter-domain partitioning strategy remains unchanged, the extension to parabolic problems requires a careful treatment of the temporal approximation. In particular, the dimension of each local reduced basis space depends not only on the number of greedily selected parameter samples, but also on the number of POD modes retained for each selected parameter. It is therefore essential to establish a criterion for determining the number of POD modes to retain throughout the recursive partitioning process. To this end, we introduce a certified error estimator derived from an inf-sup stability analysis for parametrized parabolic problems. We establish that this estimator is equivalent to the underlying approximation error. Combined with suitable smoothness assumptions on the parameter-to-solution map, this equivalence enables the derivation of rigorous convergence results that characterize the number of parameter subdomains required to achieve a prescribed approximation accuracy. Building on this theoretical foundation, we further develop a principled strategy for selecting the number of POD modes associated with each greedy parameter. ensuring that an optimal convergence rate is maintained while avoiding an unnecessary increase in the dimension of the local reduced basis spaces.

The remainder of the paper is organized as follows. Section 2 introduces the parametrized parabolic problem and its discretization, and presents the preliminary stability results required for the subsequent analysis. In Section 3, we review the classical reduced basis methodology and introduce the proposed localized reduced basis approach. Section 4 is devoted to the convergence analysis of the proposed method. Finally, Section 5 presents numerical experiments for a parametrized time-dependent convection–diffusion problem, illustrating the effectiveness of the proposed approach and validating the theoretical results.

\section{Problem statement}
Let $\Omega \subset \mathbb{R}^P$, $P=1,2,3$, be a bounded Lipschitz domain, and let $\mathcal{D} \subset \mathbb{R}^d$ denote a tensor-product structured parameter domain. We consider the Hilbert spaces $V=V(\Omega)$ and $H=H(\Omega)$ such that $V \hookrightarrow H$ with dense embedding, yielding the Gelfand triple $V \hookrightarrow H \hookrightarrow V'$. 

Given a parameter $\mu \in \mathcal{D}$, we seek the solution $u(\cdot;\mu) \in L^2(0,T;V)\cap H^1(0,T;V')$ of the parameterized evolution problem
\begin{equation}
 \left\langle\partial_t u(t;\mu), v \right\rangle_{V'\times V} + a \left( u(t;\mu), v; \mu \right) = g(t) f(v;\mu), \quad \forall v \in V, \quad t\in (0,T],
 \label{cont}
\end{equation}
subject to an initial condition $u(0;\mu)=u_0(\mu) \in H$. Here, $g(t) \in C^0(0,T]$ is called the control function. In typical settings, one has $H_0^1(\Omega) \subset V\subset H^1(\Omega)$ and $H=L^2(\Omega)$. The bilinear form $a(\cdot,\cdot;\mu)$ is assumed to be uniformly continuous and coercive , and the linear functional $f(\cdot;\mu)$ is continuous uniformly on $V$. The existence and uniqueness of solutions to this problem are established in \cite{schwab2009space}.

To discretize the problem, we first divide the time interval into $M$ subintervals of equal length $\Delta t = T/M$ and define $t^m = m \Delta t$, $0\leq m \leq M$. Employing the backward Euler scheme, we introduce the backward difference quotient
\begin{equation}
    \delta u^m(\mu):= \frac{u^m(\mu) - u^{m-1}(\mu)}{\Delta t},    
\end{equation}
where $u^m(\mu)\approx u(t^m;\mu)$. For the spatial discretization, let $\mathcal{T}_h$ be a conforming mesh of $\Omega$ with mesh size $h$  and let $V_h \subset V$ be a finite element space defined on the mesh $\mathcal{T}_h$. We identify $V_h\equiv V_h'$ by means of the $H$-inner product. In particular, 
\begin{equation}
\langle  u_h,v_h\rangle_{V_h'\times V_h}
=
(u_h,v_h)_H,
\qquad \forall\,u_h,v_h\in V_h,
\end{equation}
and the dual norm on $V_h'$ is given by
\begin{equation}
\|u_h\|_{V_h'}
:=
\sup_{ v_h\in V_h}
\frac{(u_h,v_h)_H}
{\|v_h\|_V},
\qquad \forall u_h\in V_h.
\label{V_h'}
\end{equation}
The fully discrete counterpart of \eqref{cont} is formulated as follows:  Given $\mu \in \mathcal{D}$, for each time step $1 \le m \le M$, find $u_h^m(\mu) \in V_h$ such that
\begin{equation}
\left( \delta u_h^m(\mu), v_h \right)_H + a \left( u_h^m(\mu), v_h; \mu \right) = g(t^m) f(v_h;\mu), \quad \forall v_h \in V_h,
\label{eq:discrete}
\end{equation}
subject to the initial condition $( u_h^0(\mu), v_h)_H=( u_0(\mu), v_h)_H$, $\forall v_h\in V_h$.

For later analysis, it is convenient to rewrite \eqref{eq:discrete} in a global-in-time variational form. We therefore introduce the discrete trial space
\begin{equation}
    \mathcal{X}_h=\left\{u_h = (u_h^m)_{m=0}^M \ : \  u_h^m \in V_h,\ 0\leq m\leq M\right\},
\end{equation}
equipped with the discrete norm
\begin{equation}
\|u_h\|_{\mathcal{X}_h }^2 =\sum_{m=1}^M \Delta t \,\|\delta u_h^m \|_{V_h'}^2+ \sum_{m=1}^M \Delta t \,\|u_h^m\|_V^2 +\sum_{m=1}^M \Delta t^2 \,\|\delta u_h^m \|_{H}^2+\|u_h^M\|_H^2.
\end{equation}
The corresponding test space is $\mathcal Y_h=\mathcal{X}_h$, equipped with the discrete norm
\begin{equation}
\|v_h\|_{\mathcal{Y}}^2 = \|v_h^0\|_H^2+\sum_{m=1}^M \Delta t \,\|v_h^m\|_V^2.
\end{equation}
We define the bilinear form $b(\cdot, \cdot;\mu):\mathcal{X} \times \mathcal{Y} \xrightarrow{} \mathbb{R}$ by
\begin{equation}
b(u_h, v_h;\mu) = \sum_{m=1}^M \Delta t \left( \delta u_h^m, v_h^m\right )_H + \sum_{m=1}^M \Delta t \, a( u_h^m, v_h^m;\mu) +  ( u_h^0, v_h^0)_H.
\end{equation}
The discrete problem \eqref{eq:discrete} can then be equivalently written in the global form: 
Given $\mu\in\mathcal D$, find $u_h(\mu)\in\mathcal{X}_h$ such that
\begin{equation}
    b(u_h(\mu), v_h;\mu) = \sum_{m=1}^M \Delta t\, g(t^m) f(v_h^m;\mu) + ( u_0(\mu), v_h^0)_H \quad \forall v_h\in\mathcal Y_h.
    \label{glob-dis}
\end{equation}

The discrete coercivity and continuity constants are defined by
\begin{equation}
    \alpha_h(\mu):=\inf _{v_h \in V_h } \frac{a(v_h, v_h ; \mu)}{\|v_h\|_{V}^2}, \qquad  \gamma_h(\mu):=\sup _{w_h \in V_h } \sup _{v_h \in V_h } \frac{a(w_h, v_h ; \mu)}{\|w_h\|_V \|v_h\|_{V}}.
\end{equation}
Furthermore, we assume that computable bounds $\alpha_{\mathrm{LB}}(\mu)$ and $\gamma_{\mathrm{UB}}(\mu)$ are available such that
\begin{align}
    \alpha_h(\mu)\geq \alpha_{\mathrm{LB}}(\mu) \geq \underline{\alpha} > 0, \quad \forall \mu \in \mathcal{D},\\
    \gamma_h(\mu)\leq  \gamma_{\mathrm{UB}}(\mu) \leq \overline{\gamma} < \infty, \quad \forall \mu \in \mathcal{D},
\end{align}
where $\underline{\alpha}$ and $\overline{\gamma}$ are the uniform coercivity and continuity constants. To enable an efficient offline–online decomposition, we assume that the bilinear form $a(\cdot,\cdot;\mu)$, the linear functional $f(\cdot;\mu)$ and the initial condition admit the affine representations
\begin{align}
a(w,v;\mu)
&= \sum_{q=1}^{Q_{\mathrm{a}}} \theta_{\mathrm{a}}^{q}(\mu) \, a_q(w,v),
\label{aff 1}
\\
f(v;\mu)
&= \sum_{q=1}^{Q_{\mathrm{f}}} \theta_{\mathrm{f}}^{q}(\mu) \, f_q(v),
\label{aff 2}
\\
u_0(\mu)&=\sum_{q=1}^{Q_{0}} \theta_0^q(\mu) u_q,
\label{i.c aff}
\end{align}
where the components $a_q(\cdot,\cdot)$, $f_q(\cdot)$ and $u_q$ are independent of the parameter $\mu$, while the coefficient functions $\theta_{\mathrm{a}}^{q}(\mu)$, $\theta_{\mathrm{f}}^{q}(\mu)$ and $\theta_0^q(\mu)$ depend continuously on $\mu$.
If such an affine decomposition is not available in closed form, it can be approximated to high accuracy using the Empirical Interpolation Method \cite{barrault2004empirical, grepl2007efficient}.

\subsection{Preliminary results}
The analysis presented in this section provides the stability framework for the discrete formulation \eqref{glob-dis}. We first prove the continuity of the associated bilinear form and then establish a discrete inf--sup condition with respect to the norms $\|\cdot\|_{\mathcal{X}_h}$ and $\|\cdot\|_{\mathcal{Y}}$. These properties imply the well-posedness of the discrete problem and constitute the the basis for the quasi-optimal error analysis of the reduced basis approximation. We begin by deriving a bound for the discrete initial condition in terms of the $\mathcal{X}$-norm. To this end, we make use of the following identity:
\begin{equation}
2\Delta t \left( \delta u_h^m, u_h^m \right)_H
=
\|u_h^m\|_H^2
-
\|u_h^{m-1}\|_H^2
+
\Delta t^2 \|\delta u_h^m\|_H^2,
\qquad 1\le m\le M.
\end{equation}
Summing over $m=1,\ldots,M$ yields
\begin{equation}
2\sum_{m=1}^M \Delta t \left( \delta u_h^m, u_h^m \right)_H
=
\|u_h^M\|_H^2
-
\|u_h^0\|_H^2
+
\sum_{m=1}^M \Delta t^2 \|\delta u_h^m\|_H^2.
\label{dis energy}
\end{equation}
Rearranging, we obtain
\begin{equation}
\|u_h^0\|_H^2
=
\|u_h^M\|_H^2
+
\sum_{m=1}^M \Delta t^2 \|\delta u_h^m\|_H^2
- 
2\sum_{m=1}^M \Delta t \left( \delta u_h^m, u_h^m \right)_H.
\end{equation}
Using the duality pairing estimate followed by Young's inequality, $2ab\le a^2+b^2,$ gives
\begin{equation}
2 \left| \left( \delta u_h^m, u_h^m \right)_H \right|
\le
\|\delta u_h^m\|_{V_h'}^2
+
\|u_h^m\|_V^2.
\end{equation}
Therefore,
\begin{equation}
\|u_h^0\|_H^2
\le
\|u_h^M\|_H^2
+
\sum_{m=1}^M \Delta t^2 \|\delta u_h^m\|_H^2
+
\sum_{m=1}^M \Delta t \|\delta u_h^m\|_{V_h'}^2 
+ 
\sum_{m=1}^M \Delta t \|u_h^m\|_V^2.
\end{equation}
Hence,
\begin{equation}
\|u_h^0\|_H \le \|u_h\|_{\mathcal{X}_h}.
\label{initial_bound}
\end{equation}

It follows immediately that the bilinear form $b(\cdot,\cdot;\mu): \mathcal{X}_h \times \mathcal Y_h \to \mathbb R$ is continuous, with continuity constant \(\gamma_b(\mu)\) satisfying
\begin{equation}
|b(w,v;\mu)|\le \gamma_b(\mu)\, \|w\|_{\mathcal{X}_h}\, \|v\|_{\mathcal Y}, \qquad \forall\, w \in \mathcal{X}_h,\; v \in \mathcal Y_h,
\end{equation}
where 
\begin{equation}
    \gamma_b(\mu)=\gamma_h(\mu)+2\leq \overline{\gamma}+2=:\overline{\gamma_b}.
\end{equation}

\begin{proposition}[Discrete inf--sup condition]
Let $\mu \in \mathcal{D}$. Then, there exists a constant $\beta_h(\mu) > 0$, such that
\begin{equation}
\inf_{u_h \in \mathcal{X}_h} \sup_{v_h \in \mathcal{Y}_h} \frac{|b(u_h, v_h;\mu)|}{\|u_h\|_{\mathcal{X}_h} \|v_h\|_{\mathcal{Y}}} \geq \beta_h(\mu).
\label{infsup}
\end{equation}
\label{inf sup prop}
\end{proposition}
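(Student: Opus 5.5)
The plan is to exhibit, for each $u_h\in\mathcal{X}_h$, an explicit test function $v_h\in\mathcal{Y}_h$ with $b(u_h,v_h;\mu)\ge c_1\|u_h\|_{\mathcal{X}_h}^2$ and $\|v_h\|_{\mathcal Y}\le c_2\|u_h\|_{\mathcal{X}_h}$, following the standard argument for Petrov--Galerkin space--time parabolic problems. The inf--sup constant is then $\beta_h(\mu)\ge c_1/c_2$, with $c_1,c_2$ depending only on $\alpha_h(\mu)$ and $\gamma_h(\mu)$.

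\textbf{Step 1.} For each $m$, let $z^m\in V_h$ be the Riesz-type lift of $\delta u_h^m$ with respect to $(\cdot,\cdot)_V$:
\[
(z^m,w)_V=(\delta u_h^m,w)_H \quad \forall w\in V_h .
\]
Then $\|z^m\|_V=\|\delta u_h^m\|_{V_h'}$ and $(\delta u_h^m,z^m)_H=\|\delta u_h^m\|_{V_h'}^2$. The test function is
\[
v_h^m=z^m+\lambda u_h^m \quad (1\le m\le M), \qquad v_h^0=\lambda u_h^0 ,
\]
with $\lambda>0$ to be chosen.

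\textbf{Step 2.} Expand $b(u_h,v_h;\mu)$.
\begin{itemize}
\item The $z$-part gives $\sum_m\Delta t\|\delta u_h^m\|_{V_h'}^2+\sum_m\Delta t\,a(u_h^m,z^m;\mu)$. By continuity and Young, the second sum is bounded below by $-\tfrac12\sum_m\Delta t\|\delta u_h^m\|_{V_h'}^2-\tfrac{\gamma_h^2}{2}\sum_m\Delta t\|u_h^m\|_V^2$.
\item The $\lambda$-part uses the discrete energy identity \eqref{dis energy}:
\[
\lambda\sum_m\Delta t(\delta u_h^m,u_h^m)_H+\lambda\|u_h^0\|_H^2=\tfrac{\lambda}{2}\Bigl(\|u_h^M\|_H^2+\|u_h^0\|_H^2+\sum_m\Delta t^2\|\delta u_h^m\|_H^2\Bigr),
\]
plus $\lambda\sum_m\Delta t\,a(u_h^m,u_h^m;\mu)\ge\lambda\alpha_h\sum_m\Delta t\|u_h^m\|_V^2$.
\end{itemize}
Choosing $\lambda$ so that $\lambda\alpha_h\ge\gamma_h^2/2+\alpha_h/2$, for example $\lambda=(\gamma_h^2+\alpha_h)/\alpha_h^2\cdot\alpha_h$, makes every component of $\|u_h\|_{\mathcal{X}_h}^2$ appear with a positive coefficient. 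Hence $b(u_h,v_h;\mu)\ge c_1\|u_h\|_{\mathcal{X}_h}^2$ with $c_1=\min\{\tfrac12,\tfrac{\lambda}{2},\tfrac{\alpha_h}{2}\}$ or similar.

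\textbf{Step 3.} Bound the test norm:
\[
\|v_h\|_{\mathcal Y}^2=\lambda^2\|u_h^0\|_H^2+\sum_m\Delta t\|z^m+\lambda u_h^m\|_V^2\le \lambda^2\|u_h^0\|_H^2+2\sum_m\Delta t\|\delta u_h^m\|_{V_h'}^2+2\lambda^2\sum_m\Delta t\|u_h^m\|_V^2 .
\]
The initial term is controlled by \eqref{initial_bound}, $\|u_h^0\|_H\le\|u_h\|_{\mathcal{X}_h}$, which gives $\|v_h\|_{\mathcal Y}\le c_2\|u_h\|_{\mathcal{X}_h}$ with $c_2=\sqrt{3\lambda^2+2}$.

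The main obstacle is the cross term $a(u_h^m,z^m;\mu)$. It has no sign, and it must be absorbed without losing the $V_h'$ control of $\delta u_h^m$; the careful choice of $\lambda$ in Step 2 handles this. A second subtlety is that the $\mathcal{X}_h$-norm contains no initial term while the $\mathcal Y$-norm does. This is exactly why \eqref{initial_bound} was established, and it also explains why the initial-value part of $b$ appears with a positive sign. Since $\alpha_h\ge\underline\alpha$ and $\gamma_h\le\overline\gamma$, the resulting constant $\beta_h(\mu)$ is in fact bounded below uniformly in $\mu$.
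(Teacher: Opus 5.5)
Your proof is correct, but it uses a different lift of $\delta u_h^m$ than the paper.

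The paper takes the \emph{adjoint} lift $z_h^m=(A_h'(\mu))^{-1}\delta u_h^m$, i.e.\ $a(w,z_h^m;\mu)=(\delta u_h^m,w)_H$ for all $w\in V_h$. Its test function is $v_h^m=z_h^m+u_h^m$, $v_h^0=u_h^0$, with unit weight. With this choice the cross term is never estimated. It is exactly $a(u_h^m,z_h^m;\mu)=(\delta u_h^m,u_h^m)_H$, so it merges with the other $(\delta u_h^m,u_h^m)_H$ term. The energy identity plus $\|u_h^0\|_H^2$ then gives $\|u_h^M\|_H^2+\sum_m\Delta t^2\|\delta u_h^m\|_H^2$ directly. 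The price is that the time-derivative control is only $(\delta u_h^m,z_h^m)_H=a(z_h^m,z_h^m;\mu)\ge\frac{\alpha_h}{\gamma_h^2}\|\delta u_h^m\|_{V_h'}^2$, and the test norm carries $\|z_h^m\|_V\le\alpha_h^{-1}\|\delta u_h^m\|_{V_h'}$.

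Your $V$-Riesz lift does the opposite. It is parameter-independent and gives $\|\delta u_h^m\|_{V_h'}^2$ with constant one and $\|z^m\|_V=\|\delta u_h^m\|_{V_h'}$. However, $a(u_h^m,z^m;\mu)$ then has no sign. You therefore need Young's inequality and a weight $\lambda\sim\gamma_h^2/\alpha_h$ on $u_h$, which inflates $\|v_h\|_{\mathcal Y}$ by that factor. In effect you move the deterioration from the coercivity constant $c_1$ to the test-norm constant $c_2$.

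In the regime $\alpha_h\le 1\le\gamma_h$ both routes give $\beta_h(\mu)\gtrsim\alpha_h^2/\gamma_h^2$, so neither is sharper in order. The paper's construction mirrors the classical continuous-level argument and needs no tuning parameter. Yours uses only continuity of $a$ for the cross term and coercivity only on $a(u_h^m,u_h^m;\mu)$. Both carry over verbatim to the reduced level with $V_N$ in place of $V_h$, which is what is needed for $\beta_N(\mu)$.

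A cosmetic point: your $\lambda=(\gamma_h^2+\alpha_h)/\alpha_h^2\cdot\alpha_h$ is just $(\gamma_h^2+\alpha_h)/\alpha_h$. With it, the coefficient of $\sum_m\Delta t\|u_h^m\|_V^2$ is actually at least $\alpha_h$, not $\alpha_h/2$.
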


\begin{proof}
We define the discrete operator $A_h(\mu):V_h\to V_h'$ by
\begin{equation}
\langle A_h(\mu)u_h,v_h\rangle_{V_h'\times V_h}
=
a(u_h,v_h;\mu),
\qquad \forall\,u_h,v_h\in V_h.
\end{equation}
Its adjoint $A_h'(\mu):V_h\to V_h'$ is defined by
\begin{equation}
\langle A_h'(\mu)u_h,v_h\rangle_{V_h'\times V_h}
=
a(v_h,u_h;\mu),
\qquad \forall\,u_h,v_h\in V_h.
\end{equation}
From the coercivity and boundedness of $a(\cdot,\cdot;\mu)$, we obtain
\begin{equation}
\|A_h'(\mu)\|_{V_h \to V_h'} \leq \gamma_h(\mu), \quad \|(A_h'(\mu))^{-1}\|_{V_h' \to V_h} \leq \frac{1}{\alpha_h(\mu)}.
\end{equation}

Let $u_h \in \mathcal{X}_h$ be arbitrary. For each time step $m=1,\dots,M$, define
\begin{equation}
z_h^m = A_h'(\mu)^{-1} \delta u_h^m \in V_h.
\end{equation}
We then choose the test function $v_h \in \mathcal{Y}_h$ as
\begin{align}
    v_h^m &= z_h^m + u_h^m, \quad m = 1, \ldots, M, \\
    v_h^0 &= u_h^0.
\end{align}
From the discrete operator bounds, we obtain
\begin{equation}
\|z_h^m\|_V \leq \frac{1}{\alpha_h(\mu)} \left\| \delta u_h^m \right\|_{V_h'}.
\end{equation}
Hence,
\begin{equation}
\|v_h^m\|_V \leq \|z_h^m\|_V + \|u_h^m\|_V \leq \frac{1}{\alpha_h(\mu)} \left\| \delta u_h^m \right\|_{V_h'} + \|u_h^m\|_V.
\end{equation}
Applying the inequality $(a + b)^2 \leq 2(a^2 + b^2)$, and Summing with $\Delta t$ weights yields
\begin{equation}
\sum_{m=1}^{M} \Delta t \|v_h^m\|_V^2 \leq 2 \left( \frac{1}{\alpha_h(\mu)^2} \sum_{m=1}^{M} \Delta t \left\| \delta u_h^m \right\|_{V_h'}^2 + \sum_{m=1}^{M} \Delta t \|u_h^m\|_V^2 \right).
\end{equation}
Combining this with \eqref{initial_bound}, we obtain
\begin{equation}
\|v_h\|_{\mathcal{Y}}^2 \leq C_1 \|u_h\|_{\mathcal{X}_h}^2,
\end{equation}
 with
 \begin{equation}
     C_1 = 2 \max \left( \frac{1}{\alpha_h(\mu)^2}, 1 \right) +1.
 \end{equation}

Next, evaluating $b(u_h,v_h;\mu)$ yields
\begin{equation}
\begin{aligned}
  b(u_h, v_h; \mu) =& \sum_{m=1}^M \Delta t \Bigl[ \left( \delta u_h^m, z_h^m \right)_H + \left( \delta u_h^m, u_h^m \right)_H \\
  &+ a(u_h^m, z_h^m; \mu) + a(u_h^m, u_h^m; \mu) \Bigr] + \|u_h^0\|_H^2.
\end{aligned}
\end{equation}
For the first term on the right-hand side, we have
\begin{equation}
\begin{aligned}
  \left( \delta u_h^m, z_h^m \right)_H &= ( A_h'(\mu) z_h^m, z_h^m )_H \\
  &= a(z_h^m, z_h^m; \mu) \geq \alpha_h(\mu) \|z_h^m\|_V^2 \geq \frac{\alpha_h(\mu)}{\gamma_h(\mu)^2} \left\| \delta u_h^m \right\|_{V_h'}^2.
\end{aligned}
\end{equation}
By the definition of the discrete adjoint, we have
\begin{equation}
a(u_h^m, z_h^m; \mu) = ( A_h'(\mu) z_h^m, u_h^m )_H = \left( \delta u_h^m, u_h^m \right)_H.
\end{equation}
Therefore, the second and third terms combine as
\begin{equation}
a(u_h^m, z_h^m; \mu) + \left( \delta u_h^m, u_h^m \right)_H = 2 \left( \delta u_h^m, u_h^m \right)_H.
\end{equation}

Using the identity \eqref{dis energy} and the coercivity of $a(u_h^m, u_h^m; \mu)$, we obtain
\begin{equation}
\begin{aligned}
  b(u_h, v_h; \mu) \geq  \sum_{m=1}^M \Delta t \left( \frac{\alpha_h(\mu)}{\gamma_h(\mu)^2} \left\| \delta u_h^m \right\|_{V_h'}^2 + \alpha_h(\mu) \|u_h^m\|_V^2 \right) 
  &+ \|u_h^M\|_H^2 + \sum_{m=1}^M \Delta t ^2 \,\|\delta u_h^m \|_H^2.
\end{aligned}
\end{equation}
Therefore,
\begin{equation}
  b(u_h, v_h; \mu) \geq \alpha_b(\mu) \|u_h \|^2_{\mathcal{X}_h},
\end{equation}
where 
\begin{equation}
    \alpha_b(\mu) = \min \left( \frac{\alpha_h(\mu)}{\gamma_h(\mu)^2}, \alpha_h(\mu),1 \right) > 0.
\end{equation}
Therefore,
\begin{equation}
\frac{|b(u_h, v_h; \mu)|}{\|u_h \|_{\mathcal{X}_h} \| v_h \|_{\mathcal{Y}}} \geq \frac{\alpha_b(\mu)}{\sqrt{C_1}} > 0.
\end{equation}
Since $u_h \in \mathcal{X}_h$ is chosen arbitrarily, we obtain \eqref{infsup} with 
\begin{equation}
\beta_h(\mu) \geq \frac{\alpha_b(\mu)}{\sqrt{C_1}} = \frac{\min\left(\frac{\alpha_h(\mu)}{\gamma_h(\mu)^2}, \alpha_h(\mu),1\right)}{\sqrt{2\max\left(\frac{1}{\alpha_h(\mu)^2}, 1\right)+1}} > 0.
\end{equation}
\end{proof}

Using the uniform bounds on $\alpha_h(\mu)$ and $\gamma_h(\mu)$, we obtain the parameter-uniform estimate
\begin{equation}
\beta_h(\mu) \geq \beta_{\mathrm{LB}}(\mu) \geq \underline{\beta} > 0, \quad \forall \mu \in \mathcal{D},
\end{equation}
such that
\begin{equation}
\beta_{\mathrm{LB}}(\mu) := \frac{\min\left(\frac{\alpha_{\mathrm{LB}}(\mu)}{\gamma_{\mathrm{UB}}(\mu)^2}, \alpha_{\mathrm{LB}}(\mu),1\right)}{\sqrt{2\max\left(\frac{1}{\alpha_{\mathrm{LB}}(\mu)^2}, 1\right) +1 }} > 0,
\end{equation}
and
\begin{equation}
\underline{\beta} := \frac{\min\left(\frac{\underline{\alpha}}{\overline{\gamma}^2}, \underline{\alpha},1\right)}{\sqrt{2\max\left(\frac{1}{\underline{\alpha}^2}, 1\right) +1}} > 0.
\end{equation}

\begin{corollary}[Discrete energy estimate]
Let $\mu \in \mathcal{D}$, and let $u_h(\mu) \in \mathcal{X}_h$ be the solution of \eqref{glob-dis}. Then, there exists a constant $C > 0$, such that
\begin{equation}
\sup_{\mu \in \mathcal{D}} \|u_h(\mu)\|_{\mathcal{X}_h} \leq C \sup_{\mu \in \mathcal{D}} \left( \|f(\mu)\|_{V'}^2 \, \|g\|_{L^2(0,T)}^2 + \|u_h^0(\mu)\|_H^2 \right)^{1/2}.
\end{equation}
\label{uni bound}
\end{corollary}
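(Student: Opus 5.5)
The plan is the standard argument for inf--sup stable problems: the a priori bound follows from Proposition~\ref{inf sup prop} combined with an estimate of the dual norm of the right-hand side of \eqref{glob-dis}. Fix $\mu\in\mathcal D$ and write the right-hand side as the functional
\begin{equation*}
F_\mu(v_h) := \sum_{m=1}^M \Delta t\, g(t^m) f(v_h^m;\mu) + ( u_0(\mu), v_h^0)_H .
\end{equation*}
By \eqref{infsup} and the fact that $u_h(\mu)$ solves \eqref{glob-dis},
\begin{equation*}
\beta_h(\mu)\,\|u_h(\mu)\|_{\mathcal X_h} \le \sup_{v_h\in\mathcal Y_h}\frac{|b(u_h(\mu),v_h;\mu)|}{\|v_h\|_{\mathcal Y}} = \sup_{v_h\in\mathcal Y_h}\frac{|F_\mu(v_h)|}{\|v_h\|_{\mathcal Y}} .
\end{equation*}
It therefore suffices to bound $\|F_\mu\|_{\mathcal Y_h'}$ and then use the uniform lower bound $\beta_h(\mu)\ge\underline\beta$.

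For the dual norm, the first term is handled by continuity of $f(\cdot;\mu)$ and the discrete Cauchy--Schwarz inequality:
\begin{equation*}
\Bigl|\sum_{m=1}^M \Delta t\, g(t^m) f(v_h^m;\mu)\Bigr| \le \|f(\mu)\|_{V'}\Bigl(\sum_{m=1}^M\Delta t\, g(t^m)^2\Bigr)^{1/2}\Bigl(\sum_{m=1}^M\Delta t\,\|v_h^m\|_V^2\Bigr)^{1/2}.
\end{equation*}
For the second term, the definition of the discrete initial condition gives $(u_0(\mu),v_h^0)_H=(u_h^0(\mu),v_h^0)_H$ because $v_h^0\in V_h$. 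Hence it is bounded by $\|u_h^0(\mu)\|_H\|v_h^0\|_H$. A final Cauchy--Schwarz inequality in $\mathbb R^2$ combines the two pieces into
\begin{equation*}
|F_\mu(v_h)|\le\bigl(\|f(\mu)\|_{V'}^2\,\|g\|_{\Delta t}^2+\|u_h^0(\mu)\|_H^2\bigr)^{1/2}\|v_h\|_{\mathcal Y},
\end{equation*}
where $\|g\|_{\Delta t}^2:=\sum_{m=1}^M\Delta t\,g(t^m)^2$.

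The main obstacle is replacing the discrete quantity $\|g\|_{\Delta t}$ by $\|g\|_{L^2(0,T)}$, since a right-endpoint Riemann sum need not be controlled by the $L^2$ norm for a general continuous $g$. My first option is to establish $\|g\|_{\Delta t}^2\le C_g\|g\|_{L^2(0,T)}^2$ with $C_g$ absorbed into $C$. This is exact if $g$ is interpreted as piecewise constant on the time grid. For $g$ with bounded variation or Lipschitz regularity, I would argue via $|g(t^m)|^2\le 2|g(t)|^2+2|g(t^m)-g(t)|^2$ on each subinterval, with the perturbation bounded by a $\Delta t$-independent constant for $\Delta t\le T$. Failing that, I would state the bound with the discrete norm and remark that it is equivalent to $\|g\|_{L^2(0,T)}$ for the control functions under consideration.

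With the dual-norm bound in hand, the estimate gives $\|u_h(\mu)\|_{\mathcal X_h}\le\underline\beta^{-1}\bigl(\|f(\mu)\|_{V'}^2\|g\|_{L^2(0,T)}^2+\|u_h^0(\mu)\|_H^2\bigr)^{1/2}$, up to $C_g$, for each $\mu$. Taking the supremum over $\mu\in\mathcal D$ on both sides yields the statement with $C=\sqrt{C_g}/\underline\beta$, which is independent of $\mu$ because $\underline\beta$ is parameter-uniform.
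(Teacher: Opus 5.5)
Your proof is correct and follows the paper's argument: the inf--sup bound \eqref{infsup}, a Cauchy--Schwarz estimate of the right-hand side of \eqref{glob-dis} in the dual norm of $\mathcal Y_h$, and the uniform bound $\beta_h(\mu)\ge\underline\beta$. The one difference is in your favor. The paper silently replaces $\sum_{m=1}^M\Delta t\,|g(t^m)|^2$ by $\|g\|_{L^2(0,T)}^2$ and takes $C=\sup_\mu\beta_h(\mu)^{-1}$, whereas you correctly note that this step needs an extra factor $C_g$; such a factor exists (e.g.\ the ratio of the two quantities when $g\not\equiv0$) and is $\mu$-independent because $g$ and $\Delta t$ are, and taking $C_g\ge1$ makes your constant $\sqrt{C_g}/\underline\beta$ valid.
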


\begin{proof}
From the discrete inf--sup condition \eqref{infsup}, we have
\begin{align}
\|u_h(\mu)\|_{\mathcal{X}_h} &\leq \frac{1}{\beta_h(\mu)} \sup_{v_h \in \mathcal{Y}_h} \frac{|b(u_h(\mu), v_h; \mu)|}{\|v_h\|_{\mathcal{Y}}} \\
&= \frac{1}{\beta_h(\mu)} \sup_{v_h \in \mathcal{Y}_h} \frac{| \sum_{m=1}^M \Delta t \, g(t^m) f(v_h^m;\mu) + ( u_h^0(\mu), v_h^0 )_H |}{\|v_h\|_{\mathcal{Y}}} \\
&\leq \frac{1}{\beta_h(\mu)} \sup_{v_h \in \mathcal{Y}_h} \frac{ \sum_{m=1}^M \Delta t \, |g(t^m)| \|f(\mu)\|_{V'} \|v_h^m\|_V + \|u_h^0(\mu)\|_H \|v_h^0\|_H }{ \|v_h\|_{\mathcal{Y}}}.
\end{align}

Applying the discrete Cauchy–Schwarz inequality yields
\begin{align}
\|u_h(\mu)\|_{\mathcal{X}_h} &\leq \frac{1}{\beta_h(\mu)} \sup_{v_h \in \mathcal{Y}_h} \frac{ \|f(\mu)\|_{V'} \left( \sum_{m=1}^M \Delta t \, |g(t^m)|^2 \right)^{1/2} \left( \sum_{m=1}^M \Delta t \, \|v_h^m\|_V^2 \right)^{1/2} + \|u_h^0(\mu)\|_H \|v_h^0\|_H }{\|v_h\|_{\mathcal{Y}}} \\
&\leq \frac{1}{\beta_h(\mu)} \sup_{v_h \in \mathcal{Y}_h} \frac{ \left( \|f(\mu)\|_{V'}^2 \sum_{m=1}^M \Delta t \, |g(t^m)|^2 + \|u_h^0(\mu)\|_H^2 \right)^{1/2} \|v_h\|_{\mathcal{Y}} }{\|v_h\|_{\mathcal{Y}}}.
\end{align}
Therefore,
\begin{align}
\|u_h(\mu)\|_{\mathcal{X}_h} \leq C \left( \|f(\mu)\|_{V'}^2 \, \|g\|_{L^2(0,T)}^2 + \|u_h^0(\mu)\|_H^2 \right)^{1/2},
\end{align}
with
\begin{equation}
C := \sup_{\mu \in \mathcal{D}} \frac{1}{\beta_h(\mu)} \leq \frac{1}{\underline{\beta}}.
\end{equation}
\end{proof}

\section{The Reduced Basis Method}
The RB method aims to construct a low-dimensional subspace \(\mathcal{X}_N \subset \mathcal{X}_h\) that accurately approximates the solution manifold across the entire parameter domain \(\mathcal{D}\). Here, $N$ represents the number of parameters involved in the construction the RB space. By projecting the fully discrete problem onto this reduced space, we obtain a reduced-order model that can be solved efficiently in the online stage, provided the affine decomposition (\ref{aff 1}-\ref{i.c aff}) enables an offline–online computational decomposition.

We define the discrete solution manifold as
\begin{equation}
    \mathcal{M}_h=\mathcal{M}_h(\mathcal{D}):=\left\{u_h(\mu) = (u_h^m(\mu))_{m=0}^M \mid \mu \in \mathcal{D}\right\} \subset \mathcal{X}_h.
\end{equation}
The RB trial space is defined as
\begin{equation}
    \mathcal{X}_N=\left\{u_N = (u_N^m)_{m=0}^M \ : \ u_N^m \in V_N,\ 0\leq m\leq M\right\},
\end{equation}
and is equipped with the norm 
\begin{equation}
\|u_h\|_{\mathcal{X}_N }^2 =\sum_{m=1}^M \Delta t \,\|\delta u_N^m \|_{V_N'}^2+ \sum_{m=1}^M \Delta t \,\|u_N^m\|_V^2 +\sum_{m=1}^M \Delta t^2 \,\|\delta u_N^m \|_{H}^2+\|u_N^M\|_H^2,
\end{equation}
where 
\begin{equation}
\|\delta u_N^m\|_{V_N'}
:=
\sup_{ v_N\in V_N}
\frac{(\delta u_N^m,v_N)_H}
{\|v_N\|_V}.
\label{V_N'}
\end{equation}
The corresponding RB test space is \(\mathcal{Y}_N=\mathcal{X}_N \) equipped with the norm $\|\cdot\|_\mathcal{Y}$. The RB counterpart of \eqref{glob-dis} is given by: For any \(\mu\in\mathcal D\), find \(u_N(\mu)\in\mathcal{X}_N\) such that
\begin{equation}
    b(u_N(\mu), v_N;\mu) = \sum_{m=1}^M \Delta t\, g(t^m) f(v_N^m;\mu) + ( u_0(\mu), v_N^0)_H  \quad \forall v_N\in\mathcal Y_N. 
    \label{glob-rb}
\end{equation}
Equivalently, the reduced problem can be written in time-stepping form: for each $1 \le m \le M$, find $u_N^m(\mu) \in V_N$ such that
\begin{equation}
\left( \delta u_N^m(\mu), v_N \right)_H + a \left( u_N^m(\mu), v_N; \mu \right) = g(t^m) f(v_N;\mu), \quad \forall v_N \in V_N, \quad 1\leq m \leq M,
\label{eq:rb}
\end{equation}
subject to the initial condition $( u_N^0(\mu), v_N)_H=( u_0(\mu), v_N)_H$, $\forall v_N\in V_N$. Proceeding exactly as in Proposition~\ref{inf sup prop}, one obtains the following discrete inf--sup stability result at the reduced basis level: there exists a constant $\beta_N(\mu)>0$ such that
\begin{equation}
\inf_{u_N \in \mathcal{X}_N} \sup_{v_N \in \mathcal{Y}_N} \frac{|b(u_N, v_N;\mu)|}{\|u_h\|_{\mathcal{X}_N} \|v_N\|_{\mathcal{Y}}} \geq \beta_N(\mu),
\label{RB insfup}
\end{equation}
where
\begin{equation}
\beta_N(\mu) \geq \beta_h(\mu) \geq \beta_{\mathrm{LB}}(\mu) \geq \underline{\beta} > 0, \quad \forall \mu \in \mathcal{D},
\end{equation}

The next lemma provides a quasi-optimality estimate for the reduced basis approximation. In particular, it shows that the RB solution is, up to a constant factor, as accurate as the best approximation from the reduced space.

\begin{lemma}[Quasi-optimality]
Let $\mu \in \mathcal{D}$, and let $u_h(\mu) \in \mathcal{X}_h$ and $u_N(\mu) \in \mathcal{X}_N$ denote the solutions of \eqref{glob-dis} and \eqref{glob-rb}, respectively. Then,
\begin{equation}
\|u_h(\mu) - u_N(\mu)\|_{\mathcal{X}_N} \leq \left(1 + \frac{\gamma_b(\mu)}{\beta_N(\mu)}\right) \inf_{v_N \in \mathcal{X}_N} \|u_h(\mu) - v_N\|_{\mathcal{X}_N}.
\label{optimal}
\end{equation}
\label{qoptimal}
\end{lemma}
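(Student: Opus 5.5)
We follow the standard Babuška-type argument and insert the reduced solution as an intermediate quantity. Fix $\mu\in\mathcal D$ and an arbitrary $w_N\in\mathcal X_N$, and split the error with the triangle inequality:
\begin{equation*}
\|u_h(\mu)-u_N(\mu)\|_{\mathcal X_N}\le \|u_h(\mu)-w_N\|_{\mathcal X_N}+\|w_N-u_N(\mu)\|_{\mathcal X_N}.
\end{equation*}
The second term involves only an element of $\mathcal X_N$. We therefore apply the reduced inf--sup condition \eqref{RB insfup} to $w_N-u_N(\mu)$, which gives
\begin{equation*}
\beta_N(\mu)\|w_N-u_N(\mu)\|_{\mathcal X_N}\le \sup_{v_N\in\mathcal Y_N}\frac{|b(w_N-u_N(\mu),v_N;\mu)|}{\|v_N\|_{\mathcal Y}}.
\end{equation*}

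Next we use Galerkin orthogonality. Since $\mathcal Y_N\subset\mathcal Y_h$, subtracting \eqref{glob-rb} from \eqref{glob-dis} gives $b(u_h(\mu)-u_N(\mu),v_N;\mu)=0$ for all $v_N\in\mathcal Y_N$. Consequently $b(w_N-u_N(\mu),v_N;\mu)=b(w_N-u_h(\mu),v_N;\mu)$. It then remains to bound $|b(w_N-u_h(\mu),v_N;\mu)|\le\gamma_b(\mu)\|u_h(\mu)-w_N\|_{\mathcal X_N}\|v_N\|_{\mathcal Y}$. Combining the three estimates yields $\|u_h-u_N\|_{\mathcal X_N}\le(1+\gamma_b/\beta_N)\|u_h-w_N\|_{\mathcal X_N}$, and taking the infimum over $w_N$ gives \eqref{optimal}.

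The main obstacle is this continuity step. The function $u_h-w_N$ lies in $\mathcal X_h$ rather than $\mathcal X_N$, and $\|\cdot\|_{\mathcal X_N}$ uses the weaker dual norm $\|\cdot\|_{V_N'}$ instead of $\|\cdot\|_{V_h'}$. The continuity estimate stated earlier is with respect to $\|\cdot\|_{\mathcal X_h}$, so it cannot be quoted directly, and we redo it term by term for test functions $v_N\in\mathcal Y_N$.

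The key observation is that $v_N^m\in V_N$, so by the definition \eqref{V_N'}, read for any element of $V_h$, we have $|(\delta e^m,v_N^m)_H|\le\|\delta e^m\|_{V_N'}\|v_N^m\|_V$ with $e=u_h-w_N$. The $a$-term is bounded by $\gamma_h(\mu)\|e^m\|_V\|v_N^m\|_V$. The initial term requires $\|e^0\|_H\le\|e\|_{\mathcal X_N}$, and we obtain it by repeating the derivation of \eqref{initial_bound} with $V_N'$ in place of $V_h'$.

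That repetition is the delicate point, because the inequality $2|(\delta e^m,e^m)_H|\le\|\delta e^m\|_{V_N'}^2+\|e^m\|_V^2$ needs $e^m\in V_N$, which fails in general. If it cannot be justified, we fall back on reading $\|\cdot\|_{\mathcal X_N}$ on $\mathcal X_h$ through the $V_h'$ dual norm, under which the earlier continuity constant $\gamma_b(\mu)$ applies directly. In either case, summing with Cauchy--Schwarz in $m$ gives the constant $\gamma_b(\mu)$ and completes the argument.
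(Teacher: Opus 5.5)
Your skeleton (triangle inequality through an arbitrary $w_N\in\mathcal X_N$, the reduced inf--sup condition \eqref{RB insfup} applied to $w_N-u_N(\mu)$, Galerkin orthogonality, continuity, infimum) is exactly the paper's proof. You are also right that continuity is the delicate step. The paper simply writes $|b(u_h(\mu)-v_N,w_N;\mu)|\le\gamma_b(\mu)\|u_h(\mu)-v_N\|_{\mathcal X_N}\|w_N\|_{\mathcal Y}$, even though $\gamma_b(\mu)$ was established only for $\|\cdot\|_{\mathcal X_h}$ and $u_h(\mu)-v_N\notin\mathcal X_N$. Your bounds for the time-derivative term and the $a$-term are correct, once $\|\cdot\|_{V_N'}$ is extended to $V_h$ by the same supremum over $V_N$. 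The genuine gap is the initial term, which you leave open. With $e=u_h(\mu)-w_N$, you need $|(e^0,v_N^0)_H|\le\|e\|_{\mathcal X_N}\|v_N^0\|_H$, i.e.\ $\|P_N^He^0\|_H\le\|e\|_{\mathcal X_N}$. Running the energy identity behind \eqref{initial_bound} on $P_N^He^m\in V_N$ gives exactly this, except that $\|P_N^He^m\|_V$ appears in place of $\|e^m\|_V$. The $H$-projection onto a general $V_N$ is not uniformly $V$-stable, so this route does not give the bound. For $e\notin\mathcal X_N$ and general $V_N$, you should not expect a $\Delta t$-independent constant there.

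Your fallback does not close the gap. Suppose $\|\cdot\|_{\mathcal X_N}$ is read as $\|\cdot\|_{\mathcal X_h}$ on all of $\mathcal X_h$. Then the term $\|w_N-u_N(\mu)\|$ produced by your triangle inequality is also measured in $\|\cdot\|_{\mathcal X_h}$, whereas \eqref{RB insfup} controls only the weaker $\|w_N-u_N(\mu)\|_{\mathcal X_N}$. Passing through \eqref{X-norm-equiv} costs the factor $\max\{1,C_{\mathrm{inv}}h^{-1}C_P\}$, so the constant $1+\gamma_b(\mu)/\beta_N(\mu)$ is lost. Suppose instead that $V_h'$ is used only off $\mathcal X_N$ and $V_N'$ on $\mathcal X_N$. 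Then the resulting functional is not a norm, and your opening triangle inequality is unavailable.

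A correct version of your argument keeps the $V_N'$-extension on all of $\mathcal X_h$; this is a genuine norm there and is dominated by $\|\cdot\|_{\mathcal X_h}$. Bound $\|u_h(\mu)-w_N\|_{\mathcal X_N}\le\|u_h(\mu)-w_N\|_{\mathcal X_h}$ and apply the proven $\mathcal X_h$-continuity to get
\[
\|u_h(\mu)-u_N(\mu)\|_{\mathcal X_N}\le\Bigl(1+\frac{\gamma_b(\mu)}{\beta_N(\mu)}\Bigr)\inf_{w_N\in\mathcal X_N}\|u_h(\mu)-w_N\|_{\mathcal X_h}.
\]
This is weaker than \eqref{optimal}, but it is the form actually needed later, since the right-hand side of \eqref{err1} is measured in $\|\cdot\|_{\mathcal X_h}$. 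Alternatively, $\|e^0\|_H\le\|e^1\|_H+\Delta t\|\delta e^1\|_H\le(1+C_P\Delta t^{-1/2})\|e\|_{\mathcal X_N}$. This yields \eqref{optimal} as stated, but with $\gamma_b(\mu)$ replaced by $\gamma_b(\mu)+C_P\Delta t^{-1/2}$.
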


\begin{proof}
Let $v_N \in \mathcal{X}_N$. By the discrete inf--sup condition \eqref{RB insfup}, we have
\begin{align}
\|u_N(\mu) - v_N\|_{\mathcal{X}_N} &\leq \frac{1}{\beta_N(\mu)} \sup_{w_N \in \mathcal{Y}_N} \frac{|b(u_N(\mu) - v_N, w_N; \mu)|}{\|w_N\|_{\mathcal{Y}}}.
\end{align}
Using linearity and Galerkin orthogonality, we obtain
\begin{align}
\|u_N(\mu) - v_N\|_{\mathcal{X}_N} &\leq \frac{1}{\beta_N(\mu)} \sup_{w_N \in \mathcal{Y}_N} \frac{|b(u_h(\mu) - v_N, w_N; \mu)|}{\|w_N\|_{\mathcal{Y}}} \\
&\leq \frac{1}{\beta_N(\mu)} \sup_{w_N \in \mathcal{Y}_N} \frac{\gamma_b(\mu) \|u_h(\mu) - v_N\|_{\mathcal{X}_N} \|w_N\|_{\mathcal{Y}}}{\|w_N\|_{\mathcal{Y}}} \\
&= \frac{\gamma_b(\mu)}{\beta_N(\mu)} \|u_h(\mu) - v_N\|_{\mathcal{X}_N}.
\end{align}
Finally, applying the triangle inequality gives
\begin{align}
\|u_h(\mu) - u_N(\mu)\|_{\mathcal{X}_N} &\leq \|u_h(\mu) - v_N\|_{\mathcal{X}_N} + \|u_N(\mu) - v_N\|_{\mathcal{X}_N}  \\
&\leq \left(1 + \frac{\gamma_b(\mu)}{\beta_N(\mu)}\right) \|u_h(\mu) - v_N\|_{\mathcal{X}_N}.
\end{align}
Since $v_N \in \mathcal{X}_N$ was arbitrary, taking the infimum over all $v_N \in \mathcal{X}_N$ yields \eqref{optimal}.
\end{proof}

The estimate \eqref{optimal} is expressed in the reduced basis norm $\|\cdot\|_{\mathcal X_N}$, in which the time derivative is measured in the reduced dual norm $V_N'$. For the subsequent analysis, it is more convenient to measure the error in the high-fidelity norm $\|\cdot\|_{\mathcal X_h}$. This requires comparing the dual norms on $V_h$ and $V_N$. From the definitions of the dual norms \eqref{V_h'} and
\eqref{V_N'}, it follows immediately that
\begin{equation}
    \|v_n\|_{V_N'} \leq \|v_n\|_{V_h'}, \quad \forall v_n \in V_N.
\end{equation}
To obtain the converse estimate, we use the inverse inequality on the finite element space $V_h$,
\begin{equation}
\|v_h\|_V \le C_{\mathrm{inv}}h^{-1}\|v_h\|_H,
\qquad \forall\,v_h\in V_h.
\label{normeq}
\end{equation}
let $P_N^H:V_h\rightarrow V_N$ denote the $H$-orthogonal projection.
Then, for every $v_N\in V_N$,
\begin{align}
\|v_N\|_{V_h'}
&= \sup_{w_h\in V_h} \frac{(v_N,w_h)_H}{\|w_h\|_V} = \sup_{w_h\in V_h} \frac{(v_N,P_N^Hw_h)_H}{\|w_h\|_V} \\
&\le \sup_{w_h\in V_h} \frac{\|v_N\|_{V_N'}\,\|P_N^Hw_h\|_V} {\|w_h\|_V} \le \sup_{w_h\in V_h} \frac{C_{\mathrm{inv}}h^{-1}\|v_N\|_{V_N'}\,\|P_N^Hw_h\|_H} {\|w_h\|_V}\\
&\le \sup_{w_h\in V_h} \frac{C_{\mathrm{inv}}h^{-1}\|v_N\|_{V_N'}\,\|w_h\|_H} {\|w_h\|_V}\le C_{\mathrm{inv}}h^{-1} C_P \|v_N\|_{V_N'},
\label{dual-norm-2}
\end{align}
where $C_P$ is the Poincare constant. Consequently, for any $V_N\subset V_h$, the dual norms are equivalent on $V_N$, i.e.,
\begin{equation}
\|v_N\|_{V_N'} \le \|v_N\|_{V_h'} \le C_{\mathrm{inv}}h^{-1} C_P \|v_N\|_{V_N'},
\qquad
\forall\,v_N\in V_N.
\label{dual-equivalence}
\end{equation}
Since the norms $\|\cdot\|_{\mathcal X_h}$ and $\|\cdot\|_{\mathcal X_N}$ differ only in the dual norm used to measure the time derivative, \eqref{dual-equivalence} yields
\begin{equation}
\|w_N\|_{\mathcal X_h} \le \max\{1,C_{\mathrm{inv}}h^{-1}C_P\} \|w_N\|_{\mathcal X_N},
\qquad \forall\,w_N\in\mathcal X_N.
\label{X-norm-equiv}
\end{equation}
Combining \eqref{optimal} with \eqref{X-norm-equiv} gives
\begin{equation}
\|u_h(\mu)-u_N(\mu)\|_{\mathcal X_h} \le C_h \inf_{v_N\in\mathcal X_N} \|u_h(\mu)-v_N\|_{\mathcal X_N},
\label{quais_X_h}
\end{equation}
where
\begin{equation}
    C_h = \max\{1,C_{\mathrm{inv}}h^{-1}C_P\}\left(1+\frac{\overline{\gamma_b}}{\underline{\beta}}\right).
\end{equation}
is a constant depending only on the high-fidelity space.

By repeating the argument used to establish Corollary~\ref{uni bound} at the reduced basis level, and subsequently invoking the norm equivalence \eqref{X-norm-equiv}, we obtain the following uniform stability estimate for the reduced basis solution
\begin{equation}
\sup_{\mu \in \mathcal{D}} \|u_N(\mu)\|_{\mathcal{X}_h} \leq \frac{\max\{1,C_{\mathrm{inv}}h^{-1}C_P\}}{\underline{\beta}} \sup_{\mu \in \mathcal{D}} \left( \|f(\mu)\|_{V'}^2 \, \|g\|_{L^2(0,T)}^2 + \|u_N^0(\mu)\|_H^2 \right)^{1/2}.
\label{RB uni bound}
\end{equation}

\subsection{POD/Greedy sampling}
The reduced basis space \(V_N\) is constructed using the the POD/Greedy algorithm which we briefly review below. We begin by recalling the fundamental POD optimality property. Given   $(w^m)_{m=0}^M \in \mathcal{X}_h$, the procedure $\mathrm{POD}(\{w^m \in V_h,\, 1 \le m \le M\}, i)$ returns $i$  functions \{$\xi_j, 1 \le j \le i\}\subset V_h$ that are orthonormal with respect to the inner product $(\cdot,\cdot)_V$. The associated POD space $ V_{\mathrm{POD}} = \mathrm{span}\{\xi_1,\ldots,\xi_i\}$ is characterized as the solution of the minimization problem

\begin{equation}
V_{\mathrm{POD}} = \arg\inf_{V \subset \mathrm{span}\{w^m(\mu),\, 1\le m \le M\}}
\left( \frac{1}{M}\sum_{m=1}^M \inf_{v\in V} \|w^m(\mu)-v\|_V^2 \right)^{1/2}.
\end{equation}

\begin{algorithm}
\caption{$[\xi_1,\ldots,\xi_i] = \mathrm{POD}(\{w^m \in V_h,\, 1 \le m \le M\},i) $}
\label{alg:POD}
\begin{algorithmic}[1]
\State Compute the correlation matrix $ C_{mq} = \frac{1}{M}(w^m,w^q)_V, \qquad 1 \le m,q \le M.$
\State Compute the $i$ largest eigenvalues $\lambda_j$ and corresponding eigenvectors $\psi_j \in \mathbb{R}^M$ satisfying $C \psi_j = \lambda_j \psi_j.$
\State For $j=1,\ldots,i$, define the POD modes by $ \xi_j = \left(\sum_{m=1}^M (\psi_j)_m \, w^m \right) \bigg/ \,\left\|\sum_{m=1}^M (\psi_j)_m \, w^m\right\|_V.$
\State Return the $V$-orthonormal basis $\{\xi_j\}_{j=1}^i$.
\end{algorithmic}
\end{algorithm}

The greedy algorithm is an iterative search procedure for constructing low-dimensional approximation spaces. At each iteration, the parameter corresponding to the largest value of a prescribed error estimator is selected, and the associated solution is added to the current approximation space. To describe the basic idea,  let $F:=\{f(\mu):\mu \in \mathcal{D}\}\subset\mathcal{X}_h$ be a compact set of parametrized functions. The goal is to construct, for each $N \ge 1$, an $N$-dimensional subspace $F_N=\text{span} \{f_1, \dots , f_N\}$ that provides an accurate approximation of the set $F$. Let $P_N$ be the $\mathcal{X}$--orthogonal projector onto $F_N$. Then, the greedy approximation error is defined as
\begin{equation}
    \sigma_N(F):= \sup_{f \in F} \|f - P_N f\|_{\mathcal{X}_h}
    \label{sigma_n}
\end{equation}
which quantifies the worst-case error incurred when approximating elements of $F$ by their orthogonal projections onto $F_N$.

A fundamental result in greedy approximation theory states that the greedy error is controlled by the Kolmogorov $N$--width of the set $F$; see, for example, \cite{binev2011convergence, buffa2012priori}. Specifically,
\begin{equation}
    \sigma_N(F) \leq \kappa d_N(F),
    \label{th comp}
\end{equation}
where $\kappa$ is a constant that depends on $N$ and the effectivity index of error estimator, and the Kolmogorov $N$--width, defined as 
\begin{equation}
    d_N(F):= \inf_{\substack{W \subset \mathcal{X}_h \\ \dim(W)=N}} \sup _{u \in F} \inf _{w \in W}\|u-w\|_{\mathcal{X}_h},
\end{equation}
represents the smallest possible worst-case approximation error achievable by any $N$-dimensional subspace of $\mathcal{X}_h$. 

It should be noted that, in the present context, each element of $F$ is a trajectory $ f(\mu) = \bigl(f^m(\mu)\bigr)_{m=0}^M \in \mathcal{X}_h,$ where each snapshot $f^m(\mu)$ belongs to $V_h$. Consequently, the reduced space $F_N$ has dimension $N$ as a subspace of $\mathcal{X}_h$. Since each basis element represents an entire trajectory, the associated collection of spatial snapshots contains up to $N(M+1)$ vectors in $V_h$. When the initial condition admits the affine representation \eqref{i.c aff}, the initial snapshots need not be included for each trajectory. Instead, it suffices to include, only once, the $H$-projections onto $V_h$ of the affine components $\{u_q\}_{q=1}^{Q_0}$. Consequently, the total number of snapshot vectors in $V_h$ is at most $NM+Q_0$.

The POD/Greedy algorithm introduced in \cite{haasdonk2008reduced} combines a greedy sampling strategy in the parameter domain with POD of the time trajectories. At each iteration $n, 1\leq n\leq N$, the greedy procedure selects a parameter value $\mu^{(n)} \in E$, where $E \subset \mathcal{D}$ is a finite training set. The corresponding high-fidelity trajectory $\{u_h^m(\mu^{(n)})\}_{m=0}^M$ is then computed. We then evaluate the projection error
\begin{equation}
    e^m(\mu^{(n)}) = u_h^m(\mu^{(n)}) - P_{V_{n-1}}\bigl(u_h^m(\mu^{(n)})\bigr),
\qquad 1 \le m \le M,
\end{equation}
where $P_{V_{n-1}}$ denotes the projection onto the current reduced space $V_{n-1}$ with respect to the $V$-inner product. The number of modes that satisfy a prescribed criterion is first evaluated. The POD algorithm is then applied to the resulting error trajectory to extract these dominant modes, which are subsequently added to the reduced space.

Once the reduced space has been updated, the next parameter sample is selected as the maximizer of an a posteriori error estimator $\eta_n^M(\mu)$ over the training set $E$. The construction of the estimator $\eta_n^M(\mu)$ is discussed in detail below. The algorithm ultimately returns the reduced basis space $V_N$ together with the maximum error estimator over the training set.

To initiate Algorithm~\ref{alg:PODGreedy} , we specify the maximum number of parameter samples $N$ used to construct the RB space, a training set $E\subset \mathcal{D}$, an initial parameter value $\mu^{(1)}$, and a tolerance $\epsilon$. The maximum estimated error is initialized as $\epsilon_{\max}=\infty$, and the reduced basis space is initialized by $V_0=\operatorname{span} \{u_q^h,\leq q \leq Q\}$, where $u_q^h\in V_h$ denotes the finite element projection of the initial-condition components $u_q$.

An alternative variant of the POD--Greedy algorithm was proposed in \cite{hesthaven2016certified}, where a two-stage POD compression strategy is employed to avoid storing and processing the full error trajectory. A purely greedy strategy in both the temporal and parametric domains, although often effective in practice, may stagnate and fail to achieve the same level of approximation efficiency; see \cite{grepl2005posteriori}.

\begin{algorithm}
\caption{$[V_N,\epsilon_{\max}]=\mathrm{POD/Greedy}(N,\epsilon,\mu^{(1)},E)$}
\label{alg:PODGreedy}
\begin{algorithmic}[1]
\State Set $V=\operatorname{span}\{u_q^h,\leq q \leq Q\}$, $\epsilon_{\max}=\infty$, $n=0$.
\While{$n \leqslant N$ and $\epsilon_{\max} > \epsilon$}
    \State $n \gets n + 1$.
    \State Compute $u_h^m(\mu^{(n)})$, $1 \leq m \leq M$.
    \State $e^m(\mu^{(n)}) \gets u_h^m(\mu^{(n)}) - P_V\big(u_h^m(\mu^{(n)})\big)$, $1 \leq m \leq M$.
    \State Specify the number of POD modes $i_n$.
    \State $[\xi_1,\ldots,\xi_i]=\mathrm{POD}(\{e^m(\mu^{(n)}) \in V_h,\, 1 \le m \le M\},i_n)$.
    \State $V_n \gets V_{n-1} \oplus \operatorname{span}\{\xi_j,\; 1 \leq j \leq i_n\}$.
    \State $\mu^{(n+1)} \gets \argmax_{\mu \in E} \eta_n^M(\mu)$.
    \State $\epsilon_{\max} \gets \max_{\mu \in E} \eta_n^M(\mu)$.
\EndWhile
\end{algorithmic}
\end{algorithm}

\subsection{Error estimator}
An efficient \textit{a posteriori} error estimator is a crucial ingredient in the construction of the reduced basis space. The two key ingredients are the dual norm of the reduced basis residual and a computable lower bound for the discrete inf--sup constant $\beta_h(\mu)$. It follows from the problem statements for $u_h(\mu)$, defined by \eqref{eq:discrete}, and $u_N(\mu)$, defined by \eqref{eq:rb}, that 
\begin{equation}
    \left( \delta  (u_h^m(\mu)-u_N^m(\mu)),v_h \right)_H + a\big( u_h^m(\mu)-u_N^m(\mu),v_h;\mu\big)=r_N^m(v_h;\mu), \quad \forall v_h \in V_h, \quad 1\leq m \leq M,
    \label{res error}
\end{equation}
where $r_N^m(\cdot;\mu)\in V_h'$ is the residual, 
\begin{equation}
r_N^m(v_h;\mu)
=
g(t^m)f(v_h;\mu)
-
\left( \delta u_N^m(\mu), v_h \right)_H
-
a\bigl(u_N^m(\mu), v_h;\mu\bigr),
\qquad
\forall v_h \in V_h.
\label{err}
\end{equation}
Let $R_N^m(\mu)$ be the Riesz representer of the residual, such that
\begin{equation}
\|R_N^m(\mu)\|_V = \sup_{v_h \in V_h} \frac{|r_N^m(v_h; \mu)|}{\|v\|_V}.
\end{equation}
We define the error estimate
\begin{equation}
\eta_N^m(\mu) := \left( \frac{\Delta t}{\beta_\mathrm{LB}^2(\mu)} \sum_{m'=1}^m \|R_N^{m'}(\mu)\|_{V}^2 \right)^{1/2}.
\end{equation}

\begin{proposition}[Residual-based a posteriori error estimator]
For any $\mu \in \mathcal{D}$, the reduced basis approximation satisfies
\begin{equation}
\|u_h(\mu)-u_N(\mu)\|_{\mathcal{X}_h} \le \eta_N^M(\mu) \le C_e(\mu)\,\|u_h(\mu)-u_N(\mu)\|_{\mathcal{X}_h},
\label{err ineq}
\end{equation}
for some constant $C_e(\mu)>0$.
\end{proposition}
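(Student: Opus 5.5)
The plan is to apply the discrete inf--sup condition of Proposition~\ref{inf sup prop} to the error $e_h := u_h(\mu)-u_N(\mu)\in\mathcal X_h$ for the upper bound, and the continuity of $b$ for the lower bound. Both steps rest on one error-residual identity in the global form.

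\emph{Step 1 (global error equation).} Summing \eqref{res error} with weights $\Delta t$ against $v_h^m$, and treating the initial-condition term separately, gives for all $v_h\in\mathcal Y_h$
\begin{equation*}
b(e_h,v_h;\mu)=\sum_{m=1}^M \Delta t\, r_N^m(v_h^m;\mu) + (u_h^0(\mu)-u_N^0(\mu),v_h^0)_H .
\end{equation*}
The initial term vanishes in the intended setting. The initial data is $H$-projected and $V_0$ contains the projections $u_q^h$ of the affine components \eqref{i.c aff}, so $u_N^0(\mu)=u_h^0(\mu)$. I would state this explicitly as the reason the estimator contains no initial-residual contribution.

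\emph{Step 2 (upper bound).} By \eqref{infsup} and $\beta_h(\mu)\ge\beta_{\mathrm{LB}}(\mu)$,
\begin{equation*}
\|e_h\|_{\mathcal X_h}\le \frac{1}{\beta_{\mathrm{LB}}(\mu)}\sup_{v_h}\frac{\sum_m \Delta t\, r_N^m(v_h^m;\mu)}{\|v_h\|_{\mathcal Y}} .
\end{equation*}
We have $|r_N^m(v_h^m;\mu)|\le \|R_N^m(\mu)\|_V\|v_h^m\|_V$. The discrete Cauchy--Schwarz inequality then gives the bound $\bigl(\sum_m\Delta t\|R_N^m\|_V^2\bigr)^{1/2}\bigl(\sum_m\Delta t\|v_h^m\|_V^2\bigr)^{1/2}$, and the second factor is at most $\|v_h\|_{\mathcal Y}$. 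This yields $\|e_h\|_{\mathcal X_h}\le\eta_N^M(\mu)$.

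\emph{Step 3 (lower bound).} Fix $m$ and take $v_h$ with only the $m$-th component nonzero, equal to $R_N^m(\mu)$. Then $b(e_h,v_h;\mu)=\Delta t\|R_N^m\|_V^2$. It is cleaner to take all components at once: set $v_h^m=R_N^m(\mu)$ for $m\ge1$ and $v_h^0=0$. Then
\begin{equation*}
\sum_m\Delta t\|R_N^m\|_V^2=b(e_h,v_h;\mu)\le\gamma_b(\mu)\|e_h\|_{\mathcal X_h}\Bigl(\sum_m\Delta t\|R_N^m\|_V^2\Bigr)^{1/2}.
\end{equation*}
Hence $\bigl(\sum_m\Delta t\|R_N^m\|_V^2\bigr)^{1/2}\le\gamma_b(\mu)\|e_h\|_{\mathcal X_h}$, and so $\eta_N^M(\mu)\le C_e(\mu)\|e_h\|_{\mathcal X_h}$ with $C_e(\mu)=\gamma_b(\mu)/\beta_{\mathrm{LB}}(\mu)\le\overline{\gamma_b}/\underline\beta$. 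This bound is uniform in $\mu$, which is what the later convergence analysis needs.

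The main obstacle is Step 1, specifically ensuring the initial-data mismatch vanishes (or else bounding it). If $u_N^0\neq u_h^0$ in general, I would either add the term $\|u_h^0-u_N^0\|_H$ to the estimator, or restrict to the affine initial-condition case built into Algorithm~\ref{alg:PODGreedy}. One further point needs checking: the continuity estimate for $b$ must be used with the full $\mathcal X_h$ norm, including the $V_h'$ term, which is consistent with how $\gamma_b$ was derived.
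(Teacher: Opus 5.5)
Your proof is correct. The upper bound is the paper's argument: inf--sup with $\beta_h(\mu)\ge\beta_{\mathrm{LB}}(\mu)$, vanishing of the initial term, then Cauchy--Schwarz. You are more explicit than the paper about why $u_N^0(\mu)=u_h^0(\mu)$, namely that the $H$-projected affine components $u_q^h$ lie in $V_0\subset V_N$.

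For the lower bound you take a slightly different route. The paper argues time step by time step. It reads off $\|r_N^m(\mu)\|_{V_h'}\le\|\delta e^m\|_{V_h'}+\gamma_h(\mu)\|e^m\|_V$ directly from the per-step error equation, then squares, sums and multiplies by $\Delta t/\beta_{\mathrm{LB}}^2(\mu)$. This gives $C_e(\mu)=\sqrt{2\max\{1,\gamma_{\mathrm{UB}}(\mu)^2\}}/\beta_{\mathrm{LB}}(\mu)$. You instead test the global form with $v_h^m=R_N^m(\mu)$ and $v_h^0=0$, and invoke the already-established continuity of $b$. This gives $C_e(\mu)=\gamma_b(\mu)/\beta_{\mathrm{LB}}(\mu)$.

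Your duality argument is shorter and reuses $\gamma_b$. Because $v_h^0=0$, it also makes plain that the initial-data mismatch you flag as the main obstacle matters only for the upper bound. The paper's version is local in time, so it gives efficiency of each individual residual. It also does not carry the initial-condition contribution built into $\gamma_b=\gamma_h+2$. With $v_h^0=0$ you could in fact replace $\gamma_b(\mu)$ by $\gamma_h(\mu)+1$ in your constant. Either constant is bounded uniformly in $\mu$, which is all the subsequent convergence analysis requires.
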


\begin{proof}
    By construction, the initial condition is represented exactly in the reduced space. 
    From the discrete inf--sup condition, we obtain
    \begin{align}
      \|u_h(\mu)-u_N(\mu)\|_{\mathcal{X}_h} &\leq \sup_{v_h \in \mathcal{Y}_h} \frac{|b(u_h(\mu)-u_N(\mu), v_h;\mu)|}{\beta_h(\mu) \|v_h\|_{\mathcal{Y}}} \\
      &=\sup_{v _h \in \mathcal{Y}_h}\frac{| \sum_{m=1}^M \Delta t \ r_N^m(v_h^m; \mu) |}{\beta_h(\mu) \left(\sum_{m=1}^M  \Delta t\|v_h^m\|_V^2 + \|v_h^0\|_H^2 \right)^{1/2}}\\
      & \leq \sup_{v_h \in \mathcal{Y}_h}\frac{ |\sum_{m=1}^M \Delta t \ r_N^m(v_h^m; \mu)| }{\beta_h(\mu) \left(\sum_{m=1}^M  \Delta t\|v_h^m\|_V^2 \right)^{1/2}}\\
      &\leq \sup_{v_h \in \mathcal{Y}_h}\frac{ \sum_{m=1}^M \Delta t \| r_N^m\|_{V_h'} \|v_h^m\|_V }{\beta_h(\mu) \left(\sum_{m=1}^M  \Delta t\|v_h^m\|_V^2 \right)^{1/2}}.
    \end{align}
    We apply Cauchy–Schwarz's inequality to get
    \begin{align}
      \|u_h(\mu)-u_N(\mu)\|_{\mathcal{X}_h} &\leq \sup_{v_h \in \mathcal{Y}_h} \frac{\left( \sum_{m=1}^M \Delta t \| r_N^m\|_{V_h'}^2 \right)^{1/2}  \left( \sum_{m=1}^M \Delta t \|v_h^m\|_V^2 \right)^{1/2} }{\beta_h(\mu) \left(\sum_{m=1}^M  \Delta t\|v_h^m\|_V^2 \right)^{1/2}}\\
      &\leq \left( \frac{\Delta t}{\beta_\mathrm{LB}^2(\mu)} \sum_{m=1}^M   \| r_N^m\|_{V_h'}^2 \right)^{1/2},
      \end{align}
      which proves the first inequality in \eqref{err ineq}.
      
     For the second inequality, we begin with the error equation \eqref{res error}. Applying the Cauchy--Schwarz inequality together with the continuity of the bilinear form $a(\cdot,\cdot;\mu)$, to obtain
    \begin{equation}
        \|r_N^m(v;\mu)\|_{V_h'} \le  \|\delta  (u_h^m(\mu)-u_N^m(\mu))\|_{V_h'} +\gamma_h(\mu)\| u_h^m(\mu)-u_N^m(\mu)\|_V.
    \end{equation}
    Hence,
    \begin{equation}
        \|r_N^m(\mu)\|_{V_h'}^2 \le 2\max\{1,\gamma_h(\mu)^2\} \left( \|\delta  (u_h^m(\mu)-u_N^m(\mu))\|_{V_h'}^2 + \| u_h^m(\mu)-u_N^m(\mu)\|_V^2\right).
    \end{equation}
    Summing over $m$ and multiplying by $\Delta t/\beta_\mathrm{LB}^2(\mu)$ gives
    \begin{equation}
        \eta_N^M(\mu)^2
        \le \frac{2}{\beta_\mathrm{LB}^2(\mu)} \max\left\{1,\gamma_{\mathrm{UB}}(\mu)^2\right\} \| u_h(\mu)-u_N(\mu)\|_{\mathcal{X}_h}^2.
    \end{equation}
    Taking square roots concludes the proof with
    \begin{equation}
        C_e(\mu) = \sqrt{\frac{2}{\beta_\mathrm{LB}^2(\mu)} \max\{1,\gamma_{\mathrm{UB}}(\mu)^2\}},
    \end{equation}
    where 
    \begin{equation}
        C_e(\mu) \leq \overline{C_e}:= \sqrt{\frac{2}{\underline{\beta}} \max\{1,\overline{\gamma}^2\}},
    \end{equation}
    
\end{proof}
In general, a computable analytical expression for the lower bound $\beta_{\mathrm{LB}}(\mu)$ of the inf--sup constant is not available and must be obtained numerically. An efficient offline--online procedure for the construction and evaluation of $\beta_{\mathrm{LB}}(\mu)$ is provided by the Successive Constraint Method (SCM); see \cite{huynh2007successive,rozza2008reduced}.

\subsection{Localized RB method}
We extend the adaptive domain decomposition algorithm proposed in \cite{barakat2026convergence}, originally developed for elliptic problems, to the setting of parametric parabolic PDEs. The main modification concerns the construction of the local reduced spaces: instead of using a purely greedy strategy, we employ the POD/Greedy approach discussed above.

The algorithm builds an adaptive partition of the parameter domain $\mathcal{D}$ by means of a binary tree. At each level $l$, the nodes are indexed by Boolean vectors
\begin{equation}
\mathcal{B}_l := {1}\times\{0,1\}^{l-1}.
\end{equation}
Each vector $B_l \in \mathcal{B}_l$ uniquely identifies a node of the tree and is associated with a parameter subdomain $\mathcal{D}_{B_l}\subset\mathcal{D}$. Appending a $0$ or $1$ to $B_l$ corresponds to moving to the left or right child node, respectively.

To initialize Algorithm~\ref{alg:nonlinear}, the localized RB algorithm, we specify the number of parameters $N$, the tolerance $\epsilon$, and the initial training sample $E_{B_1}$. For each current subdomain $\mathcal{D}_{B_l}$, a finite training set $E_{B_l}$ is defined. A local reduced basis space is then constructed using the POD/Greedy procedure described in Algorithm~\ref{alg:PODGreedy}. The number of POD modes retained at each step is chosen as
\begin{equation}
i_n:=\min \left\{ j \in \{1,\dots,M\} \;\middle|\; \sqrt{M\sum_{m=j+1 }^{M} \lambda^{m}(\mu^n)} \leq C_{\mathrm{opt}}\, \ell_{B_l}^{N^{1/d}}, \right\},
\end{equation}
where $\lambda^{m}(\mu^n)$ denotes the POD eigenvalues associated with the trajectory
$e^m(\mu^{(n)})$, for $1 \le m \le M$, ordered in decreasing order, and $\ell_{B_l}$ is the length of the longest edge of the subdomain $\mathcal{D}_{B_l}$. Further details on this choice of $i_n$ and on the value of the constant $C_{\mathrm{opt}}$ are provided in Section~\ref{pract}.

Once the reduced space has been constructed, a local error estimator $\eta_{B_l}$ is evaluated over the training set associated with the current subdomain. If the maximum error estimator over all the parameters in the training sample $E_{B_l}$ satisfies the prescribed tolerance $\epsilon$, the subdomain is accepted and no further refinement is performed. Otherwise, the subdomain is bisected into two equal parts while preserving the tensor-product structure. When $d > 1$, the splitting direction is chosen along the longest edge of the subdomain in order to maintain a balanced partition. While this choice is essential for the theoretical analysis, other splitting strategies may be used in practice. The node $B_l$ is then replaced by its two child nodes, denoted by $(B_l,0)$ and $(B_l,1)$. This procedure is applied recursively until the prescribed tolerance is satisfied on every subdomain. The final output is a collection of $K$ leaf nodes of the binary tree, each associated with a parameter subdomain and its corresponding reduced basis space.

During the online stage, for a given parameter $\mu\in\mathcal{D}$, the algorithm identifies the subdomain containing $\mu$ by comparing its coordinates with the bounding points of each subdomain in the partition. The reduced model associated with the corresponding leaf node is then used to compute the approximation.

Let $K_l$, $1 \leq l \leq L$, denote the number of subdomains at the $l$-th iteration of the approximation algorithm. This quantity corresponds to the total number of nodes at level $l$ of the tree together with the leaves generated at all previous levels. For convenience of notation, when $l$ is fixed, we replace the Boolean vector used to label each subdomain at iteration $l$ by a scalar index $k$, $1 \leq k \leq K_l$. Given a parameter $\mu \in \mathcal{D}$, we determine the subdomain $\mathcal{D}_k \subset \mathcal{D}$ that contains $\mu$. The corresponding local RB approximation, denoted by $u_{N,k}(\mu)$, is obtained by solving the reduced problem
\begin{equation}
    b(u_{N,k}(\mu), v_N;\mu)
    =
    \sum_{m=1}^M \Delta t\, g(t^m) f(v_N^m;\mu)
    +
    ( u_0(\mu), v_N^0)_H
    \quad
    \forall v_N\in\mathcal Y_{N,k},
    \label{localRB}
\end{equation}
where  $\mathcal{X}_{N,k}$ is the reduced basis space associated with the subdomain $\mathcal{D}_k$, and $\mathcal Y_{N,k}=\mathcal{X}_{N,k}$ is the associated test space.

The corresponding local reduced basis error estimator is defined by
\begin{equation}
\eta_{N,k}^m(\mu) := \left( \frac{\Delta t}{\beta_\mathrm{LB}^2(\mu)} \sum_{m'=1}^m \|r_{N,k}^{m'}(\mu)\|_{V_h'}^2 \right)^{1/2},
\label{local est}
\end{equation}
where $r_{N,k}^{m'}(\mu)$ denotes the residual associated to problem \eqref{localRB} at time step $m'$.

\begin{algorithm}[H]
\caption{LocalRB$(\epsilon, N, E_{B_l})$}
\label{alg:nonlinear}
    \begin{algorithmic}[1]
    \State Select $\mu_{B_l}^{(1)}$ randomly from $E_{B_l}$.
    \State $[V_{B_l},\epsilon_{B_l}]= \mathrm{POD/Greedy }(N,\epsilon,\mu_{B_l}^{(1)},E_{B_l})$
    \If {$\epsilon_{B_l} \leq \epsilon$}
    \State break
    \Else
    \State split the subdomain in half along the longest edge.
    \State construct $E_{(B_l,0)}$ and $E_{(B_l,1)}$.
    \State LocalRB $(\epsilon,N, E_{(B_l,0)})$.
    \State LocalRB $(\epsilon,N, E_{(B_l,1)})$.
    \EndIf
    \end{algorithmic}
\end{algorithm}

\section{Convergence analysis}
We now consider the spaces $V,H$ and $V'$ as spaces of complex-valued functions, and extend the  variational formulations to such spaces in a straightforward manner. The following assumption concerns the parameter dependence of the variational problem and its associated data.
\begin{assumption}
    The affine representations \eqref{aff 1}--\eqref{i.c aff} admit extensions to an open set
    $\mathcal O\subset\mathbb C^d$ containing the parameter domain $\mathcal D$. The extended maps $z \mapsto a(\cdot,\cdot;z)$, $z \mapsto f(\cdot;z)$, and $z \mapsto u_0(z)$ are holomorphic in each variable $z_j$, $j=1,\dots,d$.
    Moreover, for every $z \in \mathcal{O}$, the form $a(\cdot,\cdot;z)$ is uniformly continuous and coercive, i.e., there exist constants $\overline{\gamma}, \underline{\alpha} > 0$, independent of $z$, such that
    \begin{equation}
    |a(v,w;z)| \le \overline{\gamma}\,\|v\|_V \|w\|_V,
    \qquad
    \Re\, a(v,v;z) \ge \underline{\alpha}\,\|v\|_V^2
    \quad \forall v,w \in V.
    \end{equation}
    \label{assum map}
\end{assumption}
\noindent By Corollary~\ref{uni bound}, the fully discrete solution satisfies the uniform estimate
\begin{equation}
    \sup_{z \in \mathcal{O}}\|u_h(z)\|_{\mathcal{X}_h}^2
    \leq C\sup_{z \in \mathcal{O}} \Big(\|u_h^0(z)\|_H^2 + \|g\|_{L^2(0,T)}^2\|f(z)\|_{V_h'}^2 \Big),
    \label{uni bound complex}
\end{equation}
where the constant $C>0$ is independent of $z \in \mathcal{O}$.

Now we analyze the parameter-to-solution map $z \mapsto u_h^m(z)$ and parameter-to-discrete-time derivative map $z \mapsto \delta u_h^m(z)$. For each time step, the discrete problem \eqref{eq:discrete} can be written as
\begin{equation}
    B(u_h^m(z),v_h;z)=L_m(v_h;z),  \quad \forall v_h \in V_h, \quad 1\leq m \leq M,
\end{equation}
where
\begin{equation}
    B(u_h^m(z),v_h;z):=\frac{1}{\Delta t}\left( u_h^m(z), v_h \right)_H + a \left( u_h^m(z), v_h; z \right),
\end{equation}
and
\begin{equation}
    L_m(v_h;z):=\frac{1}{\Delta t}\left( u_h^{m-1}(z), v_h \right)_H+ g(t^m) f(v_h;z).
\end{equation}
Since $a(\cdot,\cdot;z)$ is continuous and coercive on $V_h$, the bilinear form $B(\cdot,\cdot;z)$ is also continuous and coercive. Hence, by the Lax--Milgram theorem, the bounded linear operator $\mathcal{B}(z):V_h\to V_h'$ induced by $B(\cdot,\cdot;z)$ is an isomorphism for every $z\in\mathcal O$. Therefore, for each time step $m$,
\begin{equation}
    u_h^m(z)=\mathcal{B}(z)^{-1}L_m(z).
\end{equation}
By Assumption~\ref{assum map}, the map $z\mapsto \mathcal{B}(z)$ is holomorphic from $\mathcal O$ into $\mathcal L(V_h,V_h')$. Since $\mathcal{B}(z)$ is invertible for every $z\in\mathcal O$, the map $z\mapsto \mathcal{B}(z)^{-1}$ is also holomorphic on $\mathcal O$.

For $m=1$, the functional $L_1(z)$ inherits holomorphic dependence from $u_h^0(z)$ and $f(\cdot;z)$. Hence, the map
\begin{equation}
    \mu \mapsto u_h^1(z)=\mathcal{B}(z)^{-1}L_1(z)
\end{equation}
is holomorphic in $\mathcal O$. Now assume that $u_h^{m-1}(z)$ has holomorphic dependence on $z$. Since $L_m(z)$ depends affinely on $u_h^{m-1}(z)$ and $f(\cdot;z)$, the map
\begin{equation}
    \mu \mapsto u_h^m(z)=\mathcal{B}(z)^{-1}L_m(z)
\end{equation}
is holomorphic in $\mathcal O$. By induction, the parameter-to-solution map $z\mapsto u_h^m(z)$ is holomorphic in $\mathcal O$ for every $m=1,\ldots,M$. Finally, the map $z \mapsto \delta u_h^m(z)$ is also holomorphic in $\mathcal{O}$, being a linear combination of the maps
$z\mapsto u_h^m(z)$ and $z\mapsto u_h^{m-1}(z)$.

We suppose that the parameter domain $\mathcal{D}$ is decomposed into $K$ subdomains, each characterized by side lengths 
\begin{equation} 
\ell_k = (\ell_{k,1},\ell_{k,2},\dots,\ell_{k,d}),  \qquad 1 \le k \le K.
\end{equation}
Let $\mathcal{D}_k \subset \mathcal{D}$ denote a generic subdomain centered at $\hat{\mu}_k \in \mathcal{D}$. By construction, we have $\mathcal{D}_k \subset \mathcal{O}$.
For a suitable vector $\rho_k = (\rho_{k,j})_{j=1}^d$, we introduce the associated polydisc
\begin{equation}
\mathcal{P}_{\rho_k} := \left\{ z=(z_j)_{j=1}^d \;:\; |z_j-\hat{\mu}_{k,j}| \le \rho_{k,j} \right\} = \bigotimes_{j=1}^d \left\{ |z_j-\hat{\mu}_{k,j}| \le \rho_{k,j} \right\}.
\end{equation}
The radii $\rho_k$ are chosen so that $\mathcal{D}_k \subset \mathcal{P}_{\rho_k} \subset \mathcal{O}$. One admissible choice is
\begin{equation}
\rho_{k,j} = \frac{\ell_{k,j}}{2} + \varepsilon, \qquad 1 \le j \le d,
\end{equation}
for some $\varepsilon>0$. For $\mu \in \mathcal{D}_k$, where
\begin{equation}
\mathcal{D}_k = \prod_{j=1}^d I_{k,j},
\qquad
I_{k,j} = \left[\hat{\mu}_{k,j} - \frac{\ell_{k,j}}{2}, \hat{\mu}_{k,j} + \frac{\ell_{k,j}}{2} \right],
\end{equation}
we define the normalized variables
\begin{equation}
\mu_{k,j} = \frac{2(\mu_j-\hat{\mu}_{k,j})}{\ell_{k,j}} \in [-1,1].
\label{para norm}
\end{equation}
This transformation naturally extends to the polydisc $\mathcal{P}_{\rho_k}$ and to the open set $\mathcal{O}$. Under this scaling, $\mathcal{P}_{\rho_k}$ is mapped to a rescaled polydisc $\mathcal{P}_{\hat{\rho}_k}$ with radii
\begin{equation}
\hat{\rho}_{k,j} = 1 + \frac{2\varepsilon}{\ell_{k,j}}, \qquad 1 \le j \le d,
\end{equation}
and $\mathcal{O}$ is transformed into its normalized and shifted counterpart, denoted by $\hat{\mathcal{O}}$.
Accordingly, for every $\mu \in \mathcal{D}_k$, the discrete solution admits the representation
\begin{equation}
u_h^m(\mu) = u_{h,k}^m(\mu_k),
\end{equation}
where $u_{h,k}^m(\mu_k)$ denotes the solution of the discrete parametric problem corresponding to the normalized parameterization defined by \eqref{para norm}.

For any given subdomain $\mathcal{D}_k \subset \mathcal{D}$, $1 \leq k \leq K$, we construct a local polynomial approximation based on the multivariate Taylor expansion of the normalized solution. For $1 \le m \le M$, we write
\begin{equation}
    u_h^m(\mu)=u_{h,k}^m(\mu_k)=\sum_{\nu \in \mathbb{N}^d} t_{k,\nu}^m (\mu_k)^{\nu}, 
\end{equation}
where the Taylor coefficients are defined by
\begin{equation}
    t_{k,\nu}^m=\frac{\partial^{\nu}u_{h,k}^m(0)}{\nu!}
\end{equation}
and the standard multi-index notation
\begin{equation}
    (\mu_k)^{\nu}=\prod_{j=1}^d (\mu_{k,j})^{\nu_j}
\end{equation}
is employed. The discrete-time derivative of the Taylor coefficients satisfies 
\begin{equation}
    \delta t_{k,\nu}^m=\frac{t_{k,\nu}^m-t_{k,\nu}^{m-1}}{\Delta t}=\frac{1}{\Delta t}\left(\frac{\partial^{\nu}u_{h,k}^m(0)}{\nu!}-\frac{\partial^{\nu}u_{h,k}^{m-1}(0)}{\nu!}\right)=\frac{\partial^{\nu}\delta u_{h,k}^m(0)}{\nu!}.
\end{equation}

\begin{lemma}[Bound on the $\mathcal{X}_h$-norm of the Taylor coefficients]
    Consider a discrete parabolic parametric PDE of the form \eqref{eq:discrete} with a $d$-dimensional parameter domain satisfying Assumption~1. Furthermore, let $\mathcal{D}$ be partitioned into $K$ tensor-product-structured subdomains ${\mathcal{D}_k}$, $1 \leq k \leq K$. Then there exists a constant $C>0$, such that
    \begin{equation}
    \left\|t_{k,\nu}\right\|_{\mathcal{X}_h} \leq C \hat\rho_k^{-\nu}=C \prod_{j=1}^d \hat\rho_{k,j}^{-\nu_j}, \quad \nu \in \mathbb{N}^d.
    \label{taylor bound}
    \end{equation}
    \label{lemma taylor}
\end{lemma}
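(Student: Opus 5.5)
The plan is to bound each Taylor coefficient with the multivariate Cauchy integral formula on the rescaled polydisc $\mathcal{P}_{\hat\rho_k}$, and then pass from componentwise bounds to the $\mathcal{X}_h$-norm using the uniform stability estimate \eqref{uni bound complex}.

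\textbf{Step 1: holomorphy in the normalized variable.} We showed above that $z\mapsto u_h^m(z)$ and $z\mapsto \delta u_h^m(z)$ are holomorphic on $\mathcal O$ for every $1\le m\le M$. Composing with the affine change of variables \eqref{para norm}, the maps $\mu_k\mapsto u_{h,k}^m(\mu_k)$ and $\mu_k\mapsto \delta u_{h,k}^m(\mu_k)$ are holomorphic on $\hat{\mathcal O}\supset \mathcal{P}_{\hat\rho_k}$. Since $V_h$ is finite dimensional, these are vector-valued holomorphic maps to which Cauchy's formula applies componentwise. The same holds for $u_{h,k}^0$, because the initial datum is affine in $\theta_0^q$.

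\textbf{Step 2: Cauchy estimates in each component of the norm.} We use the representation
\begin{equation*}
\partial^\nu w(0)/\nu! = (2\pi i)^{-d}\oint_{|\zeta_j|=\hat\rho_{k,j}} w(\zeta)\,\zeta^{-\nu-\mathbf 1}\,d\zeta .
\end{equation*}
This gives, for each $m$,
\begin{align*}
\|t^m_{k,\nu}\|_V &\le \hat\rho_k^{-\nu}\sup_{z\in\mathcal{P}_{\hat\rho_k}}\|u_{h,k}^m(z)\|_V, &
\|\delta t^m_{k,\nu}\|_{V_h'} &\le \hat\rho_k^{-\nu}\sup_{z}\|\delta u_{h,k}^m(z)\|_{V_h'},
\end{align*}
and analogously for $\|t^M_{k,\nu}\|_H$ and $\Delta t\|\delta t^m_{k,\nu}\|_H$. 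Here we use the identity $\delta t^m_{k,\nu}=\partial^\nu\delta u^m_{h,k}(0)/\nu!$ established just before the lemma.

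\textbf{Step 3: summing into the $\mathcal{X}_h$-norm.} Rather than bounding each term of the norm separately, the cleaner route is to apply the Cauchy formula directly to the $\mathcal X_h$-valued map $z\mapsto u_{h,k}(z)$. Minkowski's integral inequality applied to the contour integral yields $\|t_{k,\nu}\|_{\mathcal X_h}\le \hat\rho_k^{-\nu}\sup_{z\in\mathcal{P}_{\hat\rho_k}}\|u_{h,k}(z)\|_{\mathcal X_h}$. This works because $\|\cdot\|_{\mathcal X_h}$ is a norm on the finite-dimensional space $\mathcal X_h$: the $V_h'$ part is a sup of linear functionals, hence a seminorm. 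In the complex setting one takes moduli. Finally, the estimate \eqref{uni bound complex} bounds the supremum by a constant $C$ independent of $k$, since $\mathcal{P}_{\rho_k}\subset\mathcal O$, and $f$ and $u_0$ are bounded on $\mathcal O$. Without loss of generality one may shrink $\mathcal O$ to a compact neighborhood of $\mathcal D$ to guarantee this boundedness.

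\textbf{Main obstacle.} The delicate step is justifying \eqref{uni bound complex} for complex $z$, i.e.\ that the inf--sup constant of Proposition~\ref{inf sup prop} remains uniformly bounded below when $a$ is only coercive in real part. I would redo the proof of Proposition~\ref{inf sup prop} with sesquilinear forms, the same test function $v_h^m=z_h^m+u_h^m$, and real parts throughout. The identity \eqref{dis energy} then holds with $\Re(\delta u_h^m,u_h^m)_H$, and $\Re\,a(z,z)\ge\underline\alpha\|z\|_V^2$ replaces coercivity. This gives $\beta_h(z)\ge\underline\beta$ uniformly on $\mathcal O$ and hence the constant $C$ in the statement.
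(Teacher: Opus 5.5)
Your proposal is correct and follows the paper's proof: you apply the multivariate Cauchy integral formula on the rescaled polydisc $\mathcal{P}_{\hat\rho_k}$ and combine it with the uniform stability estimate \eqref{uni bound complex}. Your two refinements tighten steps the paper leaves implicit: applying the Cauchy formula and the integral triangle inequality directly to the $\mathcal{X}_h$-valued trajectory map gives $\sup_{z}\|u_{h,k}(z)\|_{\mathcal{X}_h}$ rather than a sum of componentwise suprema, and redoing the inf--sup argument with sesquilinear forms and real parts actually justifies \eqref{uni bound complex} uniformly for complex $z$, which the paper only asserts from Corollary~\ref{uni bound}.
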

\begin{proof}
    The map  $z \mapsto u_{h,k}(z)$ is holomorphic in the set $\hat{\mathcal{O}}$  containing the polydisc $\mathcal{P}_{\hat\rho_k}$. The Cauchy integral formula may be applied successively in each variable. For any $\tilde z$ in the interior of $\mathcal{P}_{\hat\rho_k}$, we have
    \begin{equation}
    u_{h,k}^m(\tilde{z}_1, \dots, \tilde{z}_d)
    = (2\pi i)^{-d} \int_{|z_1| = \hat\rho_{k,1}} \cdots \int_{|z_d| = \hat\rho_{k,d}} 
    \frac{u_{h,k}^m(z_1, \dots, z_d)}{(\tilde{z}_1 - z_1) \cdots (\tilde{z}_d - z_d)} \, \mathrm{d}z_1 \cdots \mathrm{d}z_d.
    \end{equation}
    Differentiating this expression yields
    \begin{equation}
    \frac{\partial^{|\nu|}}{\partial \tilde{z}_1^{\nu_1} \cdots \partial \tilde{z}_d^{\nu_d}} u_{h,k}^m(0, \dots, 0)
    = \nu! (2\pi i)^{-d} \int_{|z_1| = \hat\rho_{k,1}} \cdots \int_{|z_d| = \hat\rho_{k,d}}
    \frac{u_{h,k}^m(z_1, \dots, z_d)}{z_1^{\nu_1 + 1} \cdots z_d^{\nu_d + 1}} \, \mathrm{d}z_1 \cdots \mathrm{d}z_d.
    \end{equation}
    Then,
    \begin{equation}
    \left\| \partial^{\nu} u_{h,k}^m(0) \right\|_V = \left\| \frac{\partial^{|\nu|} u_{h,k}^m}{\partial \tilde{z}_1^{\nu_1} \cdots \partial \tilde{z}_d^{\nu_d}}(0, \dots, 0) \right\|_V 
    \leq \nu! \|u_{h,k}^m(z_1, \dots, z_d)\|_V  \prod_{j \leq d} \hat\rho_{k,j}^{-\nu_j}.
    \end{equation}
    and
    \begin{equation}
    \left\| \partial^{\nu} u_{h,k}^M(0) \right\|_H  
    \leq \nu! \|u_{h,k}^M(z_1, \dots, z_d)\|_H \prod_{j \leq d} \hat\rho_{k,j}^{-\nu_j}.
    \end{equation}
    Similarly, 
    \begin{equation}
    \left\| \partial^{\nu} \delta u_{h,k}^m(0) \right\|_{V_h'}
    \leq \nu! \|\delta u_{h,k}^m(z_1, \dots, z_d)\|_{V_h'}  \prod_{j \leq d} \hat\rho_{k,j}^{-\nu_j},
    \end{equation}
    \begin{equation}
    \left\| \partial^{\nu} \delta u_{h,k}^m(0) \right\|_{H}
    \leq \nu! \|\delta u_{h,k}^m(z_1, \dots, z_d)\|_{H}  \prod_{j \leq d} \hat\rho_{k,j}^{-\nu_j}.
    \end{equation}
    Therefore,
    \begin{align}
        \left\|t_{k,\nu}\right\|_{\mathcal{X}_h}&\leq \|u_{h,k}(z)\|_{\mathcal{X}_h} \prod_{j \leq d} \hat\rho_{k,j}^{-\nu_j}.
    \end{align}
    Using the uniform bound \eqref{uni bound complex}, extended to the set $\hat{\mathcal{O}}$, we obtain the desired estimate.
\end{proof}

\begin{proposition}
Consider the discrete parabolic parametric PDE \eqref{eq:discrete}, and assume that the hypotheses of Lemma~\ref{lemma taylor} hold. Let $\ell$ denote the maximum side length of the partition. Then there exists a constant $\hat{C}>0$, independent of $\ell$, such that
\begin{equation}
\max_{1 \leq k \leq K} d_{N}(\mathcal{M}_{h,k})
\leq \hat{C}\, \ell^{N^{1/d}},
\label{hat C}
\end{equation}
where $d_{N}(\mathcal{M}_{h,k})$ denotes the Kolmogorov $N$-width of the solution manifold associated with the $k$th subdomain.
\end{proposition}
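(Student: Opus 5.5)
We bound $d_N(\mathcal{M}_{h,k})$ by the error of an explicit $N$-dimensional linear space built from the Taylor coefficients of Lemma~\ref{lemma taylor}. The plan has four steps: fix a subdomain, truncate the Taylor expansion in the normalized variable $\mu_k\in[-1,1]^d$, bound the discarded tail by a geometric series, and translate the decay rate into a power of $\ell$.

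\textbf{Step 1: the approximation space.} Fix $k$ and set $n:=\lfloor N^{1/d}\rfloor$. Let $\Lambda_n:=\{\nu\in\mathbb{N}^d:\ \nu_j\le n-1,\ 1\le j\le d\}$, so that $\#\Lambda_n=n^d\le N$. Define
\[
W_{k}:=\operatorname{span}\{t_{k,\nu}:\ \nu\in\Lambda_n\}\subset\mathcal{X}_h .
\]
Each $t_{k,\nu}=(t^m_{k,\nu})_{m=0}^M$ is a full trajectory, so $\dim W_k\le N$; if needed, pad $W_k$ to dimension exactly $N$. For $\mu\in\mathcal{D}_k$ we take the truncated expansion $\sum_{\nu\in\Lambda_n}t_{k,\nu}\mu_k^\nu\in W_k$ as the approximant. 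Since $|\mu_{k,j}|\le1$,
\[
\Bigl\|u_h(\mu)-\sum_{\nu\in\Lambda_n}t_{k,\nu}\mu_k^\nu\Bigr\|_{\mathcal{X}_h}\le\sum_{\nu\notin\Lambda_n}\|t_{k,\nu}\|_{\mathcal{X}_h}\le C\sum_{\nu\notin\Lambda_n}\hat\rho_k^{-\nu}.
\]
This uses \eqref{taylor bound} and the convergence of the Taylor series in $\mathcal{X}_h$ on $\mathcal{P}_{\hat\rho_k}\supset[-1,1]^d$. The $\mathcal{X}_h$-norm is a finite combination of norms of the components $u_h^m$ and $\delta u_h^m$, each of which is a holomorphic map, so convergence in each component gives convergence in $\mathcal{X}_h$.

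\textbf{Step 2: summing the tail.} Write $q_j:=\hat\rho_{k,j}^{-1}<1$. A multi-index $\nu\notin\Lambda_n$ has some coordinate $\nu_j\ge n$. Summing the geometric series gives
\[
\sum_{\nu\notin\Lambda_n}\hat\rho_k^{-\nu}\le\sum_{j=1}^d\frac{q_j^{\,n}}{1-q_j}\prod_{i\ne j}\frac{1}{1-q_i}.
\]

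\textbf{Step 3: converting to a power of $\ell$.} We have $q_j=\ell_{k,j}/(\ell_{k,j}+2\varepsilon)\le\ell/(2\varepsilon)$, hence $q_j^n\le(2\varepsilon)^{-n}\ell^{n}$. For the prefactors, $1/(1-q_j)=1+\ell_{k,j}/(2\varepsilon)\le1+|\mathcal{D}|_\infty/(2\varepsilon)$, which is bounded uniformly in $k$ and $\ell$. Therefore $d_N(\mathcal{M}_{h,k})\le\hat C\,\ell^{n}$. The constant $\hat C$ collects $C$ (from the uniform bound \eqref{uni bound complex}, taken over the fixed set $\mathcal{O}$ and hence independent of $k$), the factor $d$, the prefactors above, and $(2\varepsilon)^{-n}$. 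This last factor depends on $N$ and $\varepsilon$ but not on $\ell$. Taking the maximum over $k$ yields \eqref{hat C}, with exponent $N^{1/d}$ read as $\lfloor N^{1/d}\rfloor$. When $N^{1/d}$ is an integer the two agree; otherwise one can absorb $\ell^{\lfloor N^{1/d}\rfloor-N^{1/d}}$ only if $\ell$ is bounded below, so I would state the result with the floor.

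\textbf{Main obstacle.} The delicate point is making $\varepsilon$ and $C$ genuinely independent of the subdomain. This requires a fixed $\varepsilon>0$ such that the $\varepsilon$-neighbourhood polydisc around every $\mathcal{D}_k$ stays inside $\mathcal{O}$. Compactness of $\mathcal{D}$ and openness of $\mathcal{O}$ give such an $\varepsilon$, for instance half the distance from $\mathcal{D}$ to $\partial\mathcal{O}$ divided by $\sqrt d$. With this $\varepsilon$, the uniform bound \eqref{uni bound complex} holds on all the $\mathcal{P}_{\rho_k}$ simultaneously.

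A secondary point is that the radii $\hat\rho_{k,j}$ blow up as $\ell\to0$. This only helps the geometric factors, but it must be checked that Lemma~\ref{lemma taylor} was applied with the supremum of $\|u_{h,k}\|_{\mathcal{X}_h}$ taken over the rescaled polydisc. That supremum is the same as the supremum over $\mathcal{P}_{\rho_k}\subset\mathcal{O}$, so it is bounded by \eqref{uni bound complex}.
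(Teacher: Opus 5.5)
This is the paper's argument. Its proof defers to Proposition~4.1 of \cite{barakat2026convergence}: a truncated Taylor expansion in the normalized variable, a geometric tail sum, and $\hat\rho_{k,j}^{-1}=\ell_{k,j}/(\ell_{k,j}+2\varepsilon)\le\ell/(2\varepsilon)$. The $V$-norm coefficient bound is replaced by the $\mathcal{X}_h$-norm bound of Lemma~\ref{lemma taylor}, exactly as you do. Your caveat that the tensor-product construction only yields the exponent $\lfloor N^{1/d}\rfloor$ is legitimate; consistently, the paper's two-parameter experiments use only $N=4,9,16$.

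One claim in your ``main obstacle'' paragraph is false as stated. A fixed $\varepsilon$ proportional to $\operatorname{dist}(\mathcal{D},\mathbb{C}^d\setminus\mathcal{O})$ does not put every $\mathcal{P}_{\rho_k}$ inside $\mathcal{O}$. The polydisc contains the point with $z_j=\hat\mu_{k,j}+i(\ell_{k,j}/2+\varepsilon)$, whose distance to the real set $\mathcal{D}$ is at least $\ell_{k,j}/2+\varepsilon$, not $\varepsilon$. So for coarse subdomains (e.g.\ $\mathcal{D}$ itself) the polydisc can leave $\mathcal{O}$.

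The repair is short:
\begin{itemize}
\item Fix $\ell_0,\varepsilon>0$ with $\sqrt{d}\,(\ell_0/2+\varepsilon)<\operatorname{dist}(\mathcal{D},\mathbb{C}^d\setminus\mathcal{O})$.
\item Apply your argument only to subdomains with $\max_j\ell_{k,j}\le\ell_0$; their polydiscs then lie in $\mathcal{O}$.
\item For the remaining subdomains, $\ell>\ell_0$, so Corollary~\ref{uni bound} gives $d_N(\mathcal{M}_{h,k})\le\sup_{\mu\in\mathcal{D}}\|u_h(\mu)\|_{\mathcal{X}_h}\le C<C\ell_0^{-n}\ell^{n}$.
\end{itemize}
The paper avoids this issue only by positing $\mathcal{P}_{\rho_k}\subset\mathcal{O}$ for its choice of radii.
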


\begin{proof}
The proof is analogous to that of Proposition~4.1 in \cite{barakat2026convergence}. The key difference is that the $V$-norm bound on the Taylor coefficients used in the elliptic analysis is replaced by the $\mathcal{X}_h$-norm bound established in Lemma~\ref{lemma taylor}.
\end{proof}

\subsection{Convergence of the localized RB method}
For any subdomain $\mathcal{D}_k$ and $\mu \in \mathcal{D}_k$, let $u_{N,k}(\mu)$ denote the solution of the reduced problem \eqref{localRB} associated with the local reduced space $V_{N,k}$ generated by Algorithm~\ref{alg:PODGreedy}, i.e.,
\begin{equation}
    [V_{N,k},\epsilon_{\max}]=\mathrm{POD/Greedy}(N,\epsilon,\mu_k^{(1)},E_k).
    \label{POD Rb space}
\end{equation}
Furthermore, let $\hat{u}_{N,k}(\mu)$ denote the solution of \eqref{localRB} obtained using the local reduced space
\begin{equation}
    \hat V_{N,k}:=\text{span}\{u_h^m(\mu_k^{(n)}):0\leq m \leq M, \quad 1\leq n \leq N  \},
    \label{full Rb space}
\end{equation}
where the entire time trajectory corresponding to each selected parameter is retained at every iteration, and no POD compression is performed.
\noindent By construction, $V_{N,k}\subseteq \hat V_{N,k}$. Consequently, the reduced approximation based on $\hat V_{N,k}$ is at least as accurate as that based on $V_{N,k}$, yielding
\begin{equation}
     \|u_{h}(\mu)-\hat u_{N,k} 
    (\mu)\|_{\mathcal{X}_h} \leq\|u_{h}(\mu)-u_{N,k}(\mu)\|_{\mathcal{X}_h} \leq  \eta_{N,k}^M(\mu),
    \label{LB}
\end{equation}
where $\eta_{N,k}^M(\mu)$ is the local estimator~\eqref{local est} associated with the local RB space $V_{N,k}$. Furthermore,
\begin{align}
    \eta_{N,k}^M(\mu) &\leq \overline{C_e} \|u_{h}(\mu)- u_{N,k}(\mu)\|_{\mathcal{X}_h}\\
    &\leq  \overline{C_e}  \{\|u_{h}(\mu)- \hat u_{N,k}(\mu)\|_{\mathcal{X}_h}+ \|\hat u_{N,k}(\mu)- u_{N,k}(\mu)\|_{\mathcal{X}_h}\}.
    \label{two errors}
\end{align}
The second term on the right-hand represents the error introduced by POD truncation and can be made arbitrarily small by retaining a sufficiently large number of POD modes. Hence, assuming that enough POD modes are retained, we have
\begin{align}
    \eta_{N,k}^M(\mu) \leq  2 \, \overline{C_e}  \|u_{h}(\mu)- \hat u_{N,k}(\mu)\|_{\mathcal{X}_h}.
    \label{est}
\end{align}
Applying the quasi-optimality estimate \eqref{quais_X_h} to $u_{h}(\mu)$ and $\hat u_{N,k}(\mu)$ and invoking the weak greedy estimate \eqref{th comp}, we obtain
\begin{equation}
    \|u_{h}(\mu)-\hat u_{N,k}(\mu)\|_{\mathcal{X}_h} \leq C_h \|u_{h}(\mu)- P_N u_{h,k}(\mu)\|_{\mathcal{X}_h} \leq C_h\kappa d_{N}(\mathcal{M}_{h,k}).
    \label{err1}
\end{equation}
Therefore,
\begin{equation}
    \eta_{N,k}^M(\mu) \leq 2 \, \overline{C_e}C_h\kappa d_{N}(\mathcal{M}_{h,k}).
    \label{upper est}
\end{equation}

Finally, we say that the parameter domain partition resulting from Algorithm~\ref{alg:nonlinear} is quasi-uniform if, for every tolerance $\epsilon>0$, the sizes of all subdomains are comparable, in the sense that the subdomain with the smallest volume is bounded below by a fixed proportion of the largest subdomain (see Assumption 2 in \cite{barakat2026convergence}).

\begin{theorem}
    Consider a discrete parabolic parametric PDE of the form \eqref{eq:discrete} with a $d$-dimensional parameter domain such that Assumption~\ref{assum map} is satisfied. Let $N$ denote the number of parameters used by Algorithm~\ref{alg:nonlinear} in the construction of the localized RB approximation. For any prescribed tolerance $\epsilon>0$, let $K(\epsilon)$ denote the number of parameter subdomains generated by Algorithm~\ref{alg:nonlinear}. If the parameter domain partition is quasi-uniform and sufficiently many POD modes are retained so that estimate \eqref{est} holds at each iteration, then there exists a constant $C>0$, independent of $\epsilon$, such that
    \begin{equation}
    K(\epsilon) \leq \max\left\{1,\frac{C}{\epsilon^{\,d/N^{1/d}}}\right\}.
    \end{equation}
    \label{th conv}
\end{theorem}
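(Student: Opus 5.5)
The plan is to combine the a posteriori upper bound \eqref{upper est} with the width estimate \eqref{hat C}. Then I will use the stopping rule of Algorithm~\ref{alg:nonlinear} and the quasi-uniformity assumption to turn a lower bound on subdomain size into an upper bound on $K(\epsilon)$. This follows the elliptic argument in \cite{barakat2026convergence}, with the estimator now controlled through \eqref{est}.

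\textbf{Step 1 (sufficient condition for acceptance).} For any subdomain $\mathcal{D}_k$ of maximal side length $\ell_k$, I combine \eqref{upper est} with \eqref{hat C} applied to that subdomain. This gives, for all $\mu\in E_k$,
\begin{equation*}
\eta_{N,k}^M(\mu)\le 2\,\overline{C_e}\,C_h\,\kappa\,\hat C\,\ell_k^{N^{1/d}} =: C_\ast\,\ell_k^{N^{1/d}} .
\end{equation*}
Here $C_\ast$ is independent of $\epsilon$ and $\ell_k$. This requires that $\hat C$ in \eqref{hat C} is uniform over subdomains. That holds because Lemma~\ref{lemma taylor} uses the uniform bound \eqref{uni bound complex}, and because $\hat\rho_{k,j}\ge 1+2\varepsilon/\ell_{k,j}$ only improves as the subdomains shrink. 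Hence any subdomain with $C_\ast\ell_k^{N^{1/d}}\le\epsilon$, that is, with
\begin{equation*}
\ell_k\le \ell^\ast(\epsilon):=(\epsilon/C_\ast)^{1/N^{1/d}},
\end{equation*}
is accepted and never split.

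\textbf{Step 2 (lower bound on leaf size).} If $\ell^\ast(\epsilon)$ is at least the diameter scale of $\mathcal{D}$, the root is accepted and $K(\epsilon)=1$, which accounts for the $\max\{1,\cdot\}$. Otherwise, every leaf was produced by splitting a parent that was rejected. By Step 1 that parent had longest edge $\ell_{\mathrm{par}}>\ell^\ast(\epsilon)$.

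Since the algorithm always bisects along the longest edge, the tensor-product boxes stay shape-regular. Starting from $\mathcal{D}$, the aspect ratios stay bounded by a constant depending only on $\mathcal{D}$ and $d$. The parent's volume is therefore at least $c\,(\ell^\ast)^d$, and the child's volume is at least $\tfrac{c}{2}(\ell^\ast)^d$. So at least one leaf, namely the child of a rejected node, has volume $\gtrsim(\ell^\ast)^d$.

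\textbf{Step 3 (counting).} Quasi-uniformity gives $\min_k|\mathcal{D}_k|\ge c_q\max_k|\mathcal{D}_k|$, and the maximum volume is at least the volume of the leaf found in Step 2. Hence every leaf has volume $\ge c'(\ell^\ast)^d$. Since the leaves partition $\mathcal{D}$,
\begin{equation*}
K(\epsilon)\le \frac{|\mathcal{D}|}{c'(\ell^\ast)^d}=\frac{|\mathcal{D}|\,C_\ast^{d/N^{1/d}}}{c'}\,\epsilon^{-d/N^{1/d}} .
\end{equation*}
This is the claim with $C=|\mathcal{D}|C_\ast^{d/N^{1/d}}/c'$.

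\textbf{Main obstacle.} The delicate step is Step 2: converting "the parent's longest edge exceeds $\ell^\ast$" into a volume lower bound. This needs the shape-regularity of longest-edge bisection, which I would prove by induction: the ratio of the longest to the shortest edge stays at most $\max(2,\text{initial ratio})$. A second point to check is that the constants $C_h$, $\kappa$ and $\overline{C_e}$ in Step 1 are uniform across subdomains. The assumption that \eqref{est} holds at every iteration is used exactly in Step 1.
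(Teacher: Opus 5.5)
Your proposal is correct and follows the paper's route. The paper also combines \eqref{upper est} with \eqref{hat C} and uses the stopping rule $\epsilon<\hat\eta_l\le 2\,\overline{C_e}C_h\hat C\kappa\,\ell_l^{N^{1/d}}$ to bound side lengths from below, then defers the volume-counting step (shape regularity, quasi-uniformity and the partition of $\mathcal D$) to the elliptic argument of \cite{barakat2026convergence}, which your Steps 2--3 spell out. One refinement you missed: every leaf is the child of a rejected node, so your Step 2 already bounds the volume of \emph{all} leaves from below, and your per-subdomain version does not need quasi-uniformity at all, whereas the paper works with the global quantities $\hat\eta_l$ and $\ell_l$, which only control the largest subdomain, and relies on quasi-uniformity to pass to the smallest.
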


\begin{proof}
     Algorithm~\ref{alg:nonlinear} terminates once the error estimator on each subdomain satisfies the prescribed tolerance $\epsilon$. If $L=1$ (and hence $K=1$), the proof is complete. Otherwise, assume that $L>1$. For $1 \le l \le L-1$ and $1 \le k \le K_l$, we denote by
    \begin{equation}
    \hat{\eta}_l := \max \{ \eta_{N,k}^M(\mu) : \mu \in \mathcal{D}_k,\; 1 \le k \le K_l \}
    \end{equation}
    the maximum a posteriori error estimator at the $l$-th iteration. By the stopping criterion of the algorithm, it follows that
    \begin{equation}
    \epsilon < \hat{\eta}_l .
    \label{eps=1}
    \end{equation}
    
    Next, we define
    \begin{equation}
    \ell_l := \max_{\substack{1 \le k \le K_l \\ 1 \le j \le d}} \ell_{k,j},
    \label{ell}
    \end{equation}
    which represents the maximum side length of the parameter subdomains at the $l$-th iteration. Using the bounds \eqref{upper est} and \eqref{hat C}, we obtain
    \begin{align}
    \hat{\eta}_l 
    &\leq 2 \, \overline{C_e}C_h \hat{C} \kappa \ell_l^{\,N^{1/d}} .
    \label{eta_1}
    \end{align}
    
    The remainder of the proof then follows by the same arguments as in the elliptic case; see Theorem 4.1 in \cite{barakat2026convergence}.
\end{proof}

\subsection{Practical takeaways}
\label{pract}
For any given parameter value, the RB approximation error arises from two distinct sources. The first is due to the reduction of the solution space through the greedy algorithm, while the second stems from the POD compression of the time-dependent solution trajectories. These two contributions are explicitly identified in~\eqref{two errors}. When the error associated with the POD compression is dominated by the error due to the reduction of the solution space, the estimator exhibits the asymptotic decay rate $\mathcal{O}\!\left(\ell^{N^{1/d}}\right)$. In the following, we show that the error introduced by the temporal trajectory compression can be bounded by the corresponding error evaluated at the parameter samples selected by the greedy algorithm during the construction of the reduced basis space.

At the $l$-th iteration of Algorithm~\ref{alg:nonlinear}, for each subdomain $\mathcal{D}_k$, $1 \leq k \leq K_l$, we assume that the basis functions spanning the reduced basis space $\hat V_{k,N}$ defined in \eqref{full Rb space} are orthonormal with respect to the inner product $(\cdot,\cdot)_V$. In particular,
\begin{equation}
    \hat V_{k,N} = \text{span} \{\phi_k^m(\mu^{(n)}) : 0 \leq m \leq M,\; 
    1 \leq n \leq N \}.
\end{equation}
with
\begin{equation}
    (\phi_k^m(\mu^{(n)}),\phi_k^{m'}(\mu^{(n')}))_V
=
\delta_{mm'}\delta_{nn'}.
\end{equation}
Accordingly, for any given parameter $\mu \in \mathcal{D}$, the reduced solution $\hat{u}_{N,k}(\mu)$ admits the expansion
\begin{equation}
    \hat{u}_{N,k}^j(\mu) = \sum_{n=1}^{N} \sum_{m=0}^{M} 
    c_{k,j}^{m,n}\,\phi_k^m(\mu^{(n)}), 
    \qquad 1 \leq j \leq M.
    \label{expansion}
\end{equation}
In what follows, $C > 0$ denotes a generic constant whose value may change from one occurrence to the next. Since the basis functions are orthonormal, estimate~\eqref{RB uni bound} implies
\begin{equation}
    \sum_{j=1}^{M} \Delta t \sum_{n=1}^{N} \sum_{m=1}^{M} 
     |c_{k,j}^{m,n} |^2 \leq C.
     \label{coeff bound}
\end{equation}

Let ${u}_{N,k}$ denotes the RB solution based on the POD generated space $V_{k,N}$ defined in \eqref{POD Rb space}, and define the error at each time step by $\textrm{err}^j := \hat{u}_{N,k}^j(\mu) - u_{N,k}^j(\mu)$. 
Expanding the graph norm yields
\begin{equation}
    \|\textrm{err}r\|_{\mathcal{X}_h}^2 
    = \sum_{j=1}^{M} \Delta t\,\|\textrm{err}^j\|_V^2  +  \sum_{j=1}^{M} \Delta t \|\delta \textrm{err}^j\|_{V_h'}^2 +   \sum_{j=1}^{M} \Delta t^2\|\delta \textrm{err}^j\|_{H}^2 + \|\textrm{err}^M\|_H^2.
\end{equation}
Using the definition of the discrete time derivative, namely, $\delta \textrm{err}^j= (\textrm{err}^j - \textrm{err}^{j-1})/\Delta t$, together with the inequality $\|a-b\|^2 \le 2\left(\|a\|^2+\|b\|^2\right),$ we obtain
\begin{equation}
\begin{split}
    \|\textrm{err}r\|_{\mathcal{X}_h}^2 
    &\leq \sum_{j=1}^{M} \Delta t \|\textrm{err}^j\|_V^2 
       + \sum_{j=1}^{M} \frac{2}{\Delta t} \left(\|\textrm{err}^j\|_{V_h'}^2 + \|\textrm{err}^{j-1}\|_{V_h'}^2 \right)\\
        &+\sum_{j=1}^{M} 2\left(\|\textrm{err}^j\|_{H}^2 + \|\textrm{err}^{j-1}\|_{H}^2\right) +\|\textrm{err}^M\|_H^2.
\end{split}
\end{equation}
Invoking the embedding inequalities $\|v\|_{H}\leq C_P \|v\|_{V}$ and $ \|v\|_{V_h'} \leq C_P^2 \|v\|_{V}$ gives
\begin{align}
    \|\textrm{err}\|_{\mathcal{X}_h}^2 
    & \leq \sum_{j=1}^{M} \Delta t\left(\|\textrm{err}^j\|_V^2 
       + \frac{4C_p^4}{\Delta t^2}\|\textrm{err}^j\|_V^2+ \frac{4C_p^2}{\Delta t}\|\textrm{err}^j\|_V^2+ \frac{C_p^2}{\Delta t}\|\textrm{err}^j\|_V^2\right)\\
    &\leq \max\!\left\{1,\,\frac{4C_p^4}{\Delta t^2} ,\, \frac{4C_p^2}{\Delta t} \right\}
       \sum_{j=1}^{M} \Delta t\,\|\textrm{err}^j\|_V^2.
\end{align}
Applying Céa's lemma at each time step $j$, and absorbing the constants, we obtain
\begin{equation}
     \|\hat{u}_{N,k}(\mu) - u_{N,k}(\mu) \|_{\mathcal{X}_h}^2
    \leq \frac{C}{\Delta t^2}        \sum_{j=1}^{M} \Delta t
     \|\hat{u}_{N,k}^j(\mu) - P_{ V _{k,N}} \hat{u}_{N,k}^j(\mu)\|_V^2,
\end{equation}
where $P_{ V _{k,N}}$ denotes the $V$--orthogonal projection onto the space $V _{k,N}$. Using the expansion \eqref{expansion}, we get
\begin{equation}
    \|\hat{u}_{N,k}(\mu) - u_{N,k}(\mu)\|_{\mathcal{X}_h}^2
    \leq \frac{C}{\Delta t^2}\sum_{j=1}^{M} \Delta t\,
       \left\|\sum_{n=1}^{N} \sum_{m=1}^{M} c_{k,j}^{m,n}
        \left(\phi_k^m(\mu^{(n)}) - P_{ V _{k,N}}\phi_k^m(\mu^{(n)}) \right)
       \right\|_V^2\end{equation}
Applying the Cauchy--Schwarz inequality, we deduce
\begin{equation}
     \|\hat{u}_{N,k}(\mu) - u_{N,k}(\mu)\|_{\mathcal{X}_h}^2
    \leq \frac{C}{\Delta t^2}\left(\sum_{j=1}^{M} \Delta t \sum_{n=1}^{N} \sum_{m=1}^{M}
        |c_{k,j}^{m,n} |^2\right)
       \sum_{n=1}^{N} \sum_{m=1}^{M}
        \|\phi_k^m(\mu^{(n)}) - P_{ V _{k,N}}\phi_k^m(\mu^{(n)}) \|_V^2.
\end{equation}
Recalling the uniform bound \eqref{coeff bound} on the coefficients, the final estimate reduces to
\begin{equation}
     \|\hat{u}_{N,k}(\mu) - u_{N,k}(\mu) \|_{\mathcal{X}_h}
    \leq \frac{C}{\Delta t}\sum_{n=1}^{N}\sqrt{\sum_{m=1}^{M}
        \|\phi_k^m(\mu^{(n)}) - P_{ V _{k,N}}\phi_k^m(\mu^{(n)}) \|_V^2}.
\end{equation}
Therefore, it is sufficient to control the POD compression error at the parameter values selected by the greedy algorithm in order to control the POD error uniformly over the entire parameter domain. 

Imposing the condition
\begin{equation}
    \sqrt{\sum_{m=1}^{M}
     \|\phi_k^m(\mu^{(n)}) - P_{ V _{k,N}}\phi_k^m(\mu^{(n)}) \|_V^2}
    \leq  C\,\Delta t\, \ell_l^{N^{1/d}}, \quad n=1,\ldots,N,
    \label{POD cond}
\end{equation}
ensures that
\begin{equation}
     \|\hat{u}_{N,k}(\mu) - u_{N,k}(\mu)\|_{\mathcal{X}_h} 
     \leq NC\,\ell_l^{N^{1/d}} \quad \forall \mu\in\mathcal D_k.
\end{equation}
where $\ell_l$ is defined in \eqref{ell}. Consequently, the POD compression error is of the same asymptotic order as the greedy approximation error, and estimate~\eqref{est} follows.

At iteration $n$, $1 \leq n \leq N$, of Algorithm~\ref{alg:PODGreedy}, the RB space is enriched with POD modes of the projection error
\[
e_k^m(\mu^{(n)}) = \phi_k^m(\mu^{(n)}) - P_{ V _{k,n-1}}\phi_k^m(\mu^{(n)}), 
\qquad 1 \leq j \leq M.
\]
The POD truncation satisfies
\begin{equation}
    \sqrt{\sum_{m=1}^{M}
     \|e_k^m(\mu^{(n)}) - P_{i_n}e_k^m(\mu^{(n)}) \|_V^2}
    = \sqrt{M\sum_{m=i_n +1}^{M} \lambda_k^m(\mu^{(n)})},
\end{equation}
where $P_{i_n}$ denotes the orthogonal projection onto the space spanned by the first $i_n$ POD modes of $\{ e_k^m(\mu^{(n)}) \}_{m=1}^M$.
Using the definition of $e_k^m(\mu^{(n)})$ and orthonormality, we obtain
\begin{equation}
    \sqrt{\sum_{m=1}^{M}
     \|e_k^m(\mu^{(n)}) - P_{i_n}e_k^m(\mu^{(n)}) \|_V^2}
    =
    \sqrt{\sum_{m=1}^{M}
     \|\phi_k^m(\mu^{(n)}) - P_{ V _{k,n}}\phi_k^m(\mu^{(n)}) \|_V^2}.
\end{equation}
Thus, the decay of the eigenvalues $\{\lambda_k^m(\mu^{(n)})\}$ determines the number of POD modes required at each iteration to satisfy condition~\eqref{POD cond}. Specifically, $i_n$ is chosen such that
\begin{equation}
    \sqrt{M\sum_{m=i_n +1}^{M} \lambda_k^m(\mu^{(n)})} 
    \leq C_{\mathrm{opt}}\, \Delta t \, \hat\ell_k^{N^{1/d}},
    \label{POD criterion}
\end{equation}
for some constant $C_{\mathrm{opt}}$ depending on $\Delta t$, $N$, and $d$, where $(\hat\ell_k \leq \ell_l)$ is the longest side of the $k$--th parameter subdomain. The quasi-uniformity assumption is required for the theoretical result in Theorem~\ref{th conv}; however, it is not enforced in practice, as doing so would unnecessarily increase the total number of RB spaces and thereby compromise the efficiency of the online stage. Consequently, using the local quantity $\hat\ell_k$, rather than the global maximum side length $\ell_l$, is crucial for the termination of Algorithm~\ref{alg:nonlinear}. Without enforcing quasi-uniformity, $\ell_l$ may remain unchanged during the algorithm if a subdomain attaining the maximum size already satisfies the prescribed tolerance and is therefore no longer refined. If $\ell_l$ were used in the \eqref{POD criterion}, the contribution of the POD truncation error would then remain fixed for the remaining subdomains. As a result, these subdomains may never satisfy the prescribed tolerance and could be partitioned indefinitely. The use of the local length scale $\hat\ell_k$ avoids this issue by adapting the POD tolerance to the size of each individual parameter subdomain, thereby ensuring that the refinement process can terminate without imposing unnecessary global refinement.

For the sake of online efficiency, the constant $C_{\mathrm{opt}}$ must be selected with care. If chosen too small, an excessive number of POD modes is retained, resulting in a large reduced basis space and, consequently, increased online computational cost. On the other hand, if $C_{\mathrm{opt}}$ is taken too large, the reduced basis dimension remains small due to the limited inclusion of POD modes, which may benefit the online stage. However, this comes at the expense of a significantly more expensive offline stage, as well as increased storage requirements. In particular, a finer partition of the parameter domain is then necessary to meet the prescribed tolerance, since the approximation error remains relatively large when the reduced basis space is insufficiently rich. Moreover, if $C_{\mathrm{opt}}$ is excessively large, the onset of the asymptotic convergence regime is delayed and may only become apparent for prohibitively small values of $\ell_k$. To balance these effects, we select $C_{\mathrm{opt}}$ such that
\begin{equation}
    C_{\mathrm{opt}}\, \Delta t \, \ell_1^{N^{1/d}} 
    = \zeta \max_{1 \leq n \leq N} 
    \sqrt{M\sum_{m=2}^{M} \lambda^m(\mu^{(n)})},
    \label{opt_final}
\end{equation}
for some chosen value $\zeta$, where $\lambda^m(\mu^{(n)})$ denote the POD eigenvalues associated with the trajectory $e^j(\mu^{(n)})$ at the original parameter domain. This choice ensures that, at the initial stage, only one POD mode is retained for each parameter, thereby promoting an efficient online stage, while keeping the tolerance sufficiently small to avoid delaying the onset of the asymptotic convergence regime.

 \section{Numerical results}
 We now present numerical experiments. The purpose is threefold. First, we validate the theoretical convergence result established in Theorem~\ref{th conv}. Second, we compare the online computational cost of the nonlinear method against that of the linear method. Third, we investigate the impact of the constant $C_{\mathrm{opt}}$ on both the online and offline stages by selecting different values of $\zeta$ in \eqref{opt_final}.

We consider the parametrized time-dependent convection--diffusion problem introduced in \cite{eftang2011hp2}. Let $\mu=(\mu_1,\mu_2)\in\mathcal{D}$ denote the parameter vector, where $\mu_1$ specifies the direction of the flow and $\mu_2$ its magnitude. Accordingly, the parameter-dependent velocity field is defined by
\begin{equation}
V(\mu)=
\begin{bmatrix}
\mu_2\cos\mu_1\
\mu_2\sin\mu_1
\end{bmatrix}^T.
\end{equation}
For a given parameter $\mu\in\mathcal{D}$, the scalar field $u(\mu)$ satisfies
\begin{equation}
\partial_t u(t;\mu) - \Delta u(t;\mu) + V(\mu)\cdot \nabla u(t;\mu) = 10 
\quad \text{in } \Omega, \quad t\in(0,T],
\end{equation}
subject to homogeneous Dirichlet boundary conditions and a zero initial condition where the physical domain is given by
$\Omega = \{(x,y) \in \mathbb{R}^2 : x^2 + y^2 < 2 \}$. We define the associated solution space $V := H_0^1(\Omega)$.

The corresponding parametrized weak formulation is: for a given $\mu\in\mathcal{D}$, find $u(t;\mu) \in L^2(0,T;V)\cap H^1(0,T;V')$ such that
\begin{equation}
\left\langle\partial_t u(t;\mu),v\right\rangle_{V'\times V}
+a\left(u(t;\mu),v;\mu\right)
=f(v),
\qquad \forall v\in V,\quad t\in(0,T],
\end{equation}
where the parameter-dependent bilinear form is
\begin{align}
a(w, v; \mu) &= \int_{\Omega} \nabla w \cdot \nabla v 
+ \int_{\Omega} (V(\mu) \cdot \nabla w)\, v \\
&= \int_{\Omega} \nabla w \cdot \nabla v 
+ \mu_2 \cos \mu_1 \int_{\Omega} \frac{\partial w}{\partial x} v 
+ \mu_2 \sin \mu_1 \int_{\Omega} \frac{\partial w}{\partial y} v,
\end{align}
and the linear functional is
\begin{equation}
f(v) = 10 \int_{\Omega} v, \quad \forall v \in V.
\end{equation}
We note that the parameter dependence appears only in the bilinear form $a(\cdot,\cdot;\mu)$, which admits an affine decomposition. Furthermore, the bilinear form $a(\cdot,\cdot;\mu)$ is coercive with a parameter-independent constant $\alpha = 1$, and its continuity constant satisfies $\gamma_h(\mu) = 1 + |\mu_2| C_P$, where $C_P$ denotes the Poincar\'e constant.

For the temporal discretization, we consider time levels $t^m = 0.01\,m$, for $0 \leq m \leq 100$. For the truth approximation, we consider a $\mathbb{P}_1$ finite element space $V_h \subset V$ of dimension $2300$, defined over a triangular mesh.

Finally, we define two parameter domains:
\begin{equation}
\mathcal{D}_{\mathrm{I}} := \{0\} \times [0, 10], 
\qquad 
\mathcal{D}_{\mathrm{II}} := [0, \pi] \times [0, 10],
\end{equation}
corresponding to the one-parameter case ($d=1$) and the two-parameter case ($d=2$), respectively.

We now present the convergence results for Algorithm~\ref{alg:nonlinear}. We begin by specifying the maximum number of parameters $N$ to be selected by the greedy algorithm, the size of the training set $E_{B_1}$, and the initial parameter $\mu_{B_1}^{(1)}$. The algorithm is executed for various values of the tolerance $\epsilon$, and we plot the number of subdomains $K$ as a function of $\epsilon$.

We begin with the one-parameter case, $\mathcal{D} = \mathcal{D}_{\mathrm{I}}$, and choose $(0,0)$ as the initial parameter. The training set consists of $10^2$ randomly sampled points. The POD optimality constant $C_{\mathrm{opt}}$ is chosen corresponding to $\zeta=5$. In Figure~\ref{comp_1d}, we plot the number of subdomains $K$ against the tolerance $\epsilon$, considering three different cases: $N = 1, 2,$ and $3$. The observed convergence rates align well with the theoretical results presented in Theorem~\ref{th conv}.

\begin{figure}[!htbp]
    \centering
    \includegraphics[width=.6\textwidth,height=.5\textwidth]{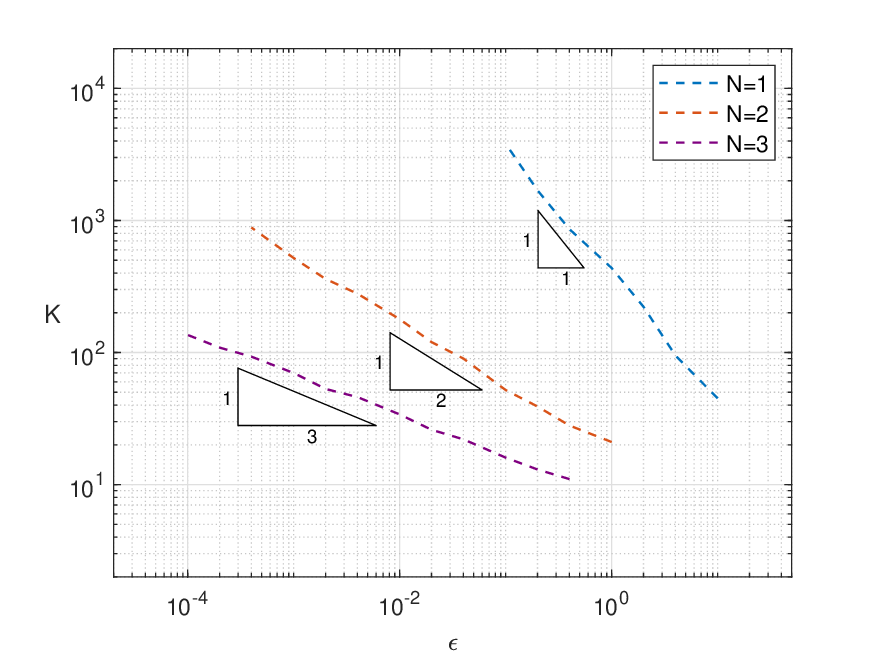}
    \caption{Number of subdomains generated by Algorithm~\ref{alg:nonlinear} to achieve a target tolerance $\epsilon$ for the case $\mathcal{D}= \mathcal{D}_{\mathrm{I}}$ with $N=1,2,$ and $3$.}
    \label{comp_1d}
\end{figure}

Next, we present the online cost improvement achieved by the nonlinear treatment. To obtain a benchmark for comparison, we construct a linear model by running Algorithm~\ref{alg:PODGreedy} with $N = \infty$, $\mu^{(1)} = (0,0)$, $\epsilon = 10^{-4}$, and a training set size of $10^3$. At each step of the algorithm, we retain only one POD mode by setting $i=1$ in Step 6. This choice is advantageous for the linear model, for which the same parameter $\mu$ may be selected multiple times. Under these settings, the tolerance is achieved with a RB space of size $53$. In Figure~\ref{linear1D}, we plot the tolerance $\epsilon$ against the maximum size $\hat{N}$ of the RB spaces in the library resulting from Algorithm~\ref{alg:nonlinear} for the two cases $N = 2$ and $N = 3$, alongside the linear model reference case (which involves no splitting of the parameter domain). As expected, the nonlinear approach improves the online cost. The case \(N=2\) yields better online performance than \(N=3\), as the tolerance is achieved with a smaller maximum RB-space size \(\hat{N}\). However, this improvement comes at a higher offline cost: the smaller value of \(N\) leads to smaller RB bases on each subdomain, requiring further partitioning to achieve the prescribed tolerance and consequently resulting in a larger library, as shown in Figure~\ref{comp_1d}. This larger library also increases the associated storage requirements.

\begin{figure}[!htbp]
    \centering
    \includegraphics[width=.6\textwidth,height=.45\textwidth]{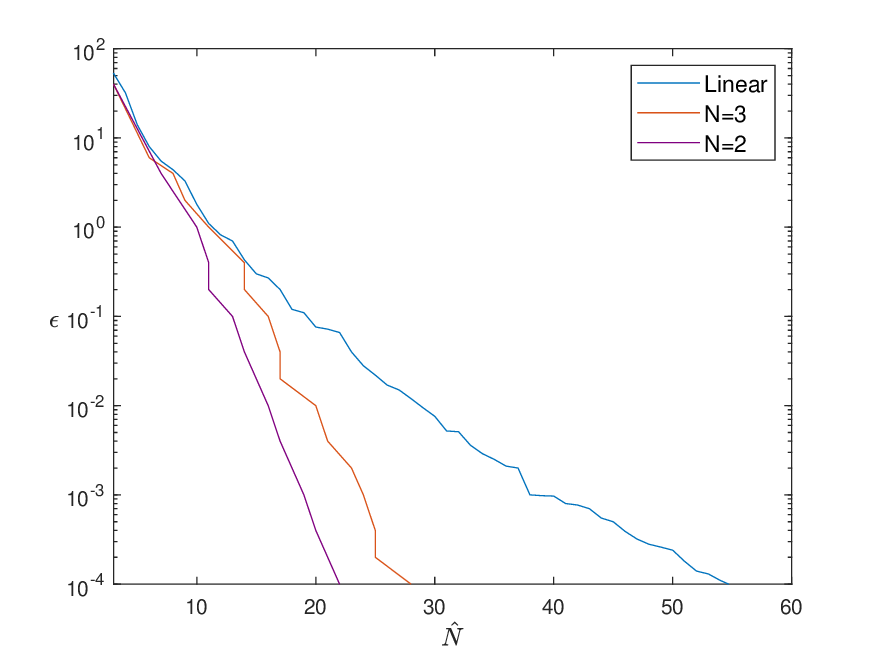}
    \caption{Convergence results for the linear model compared to the nonlinear models produced by  Algorithm~\ref{alg:nonlinear} for the case $\mathcal{D}= \mathcal{D}_{\mathrm{I}}$ with $N=2$ and $3$.}
    \label{linear1D}
\end{figure}

\begin{figure}[!htbp]
    \centering
    \includegraphics[width=.6\textwidth,height=.45\textwidth]{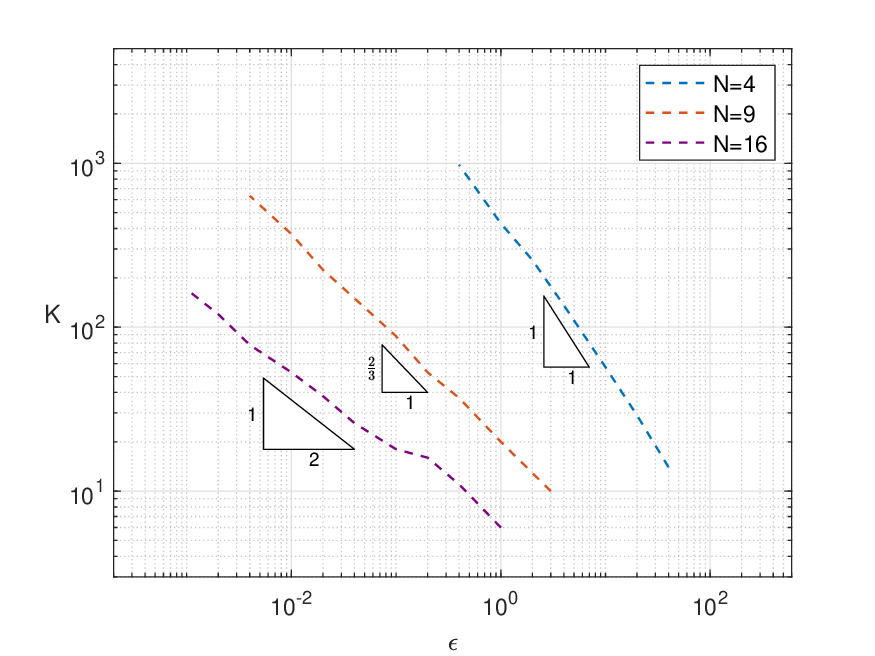}
    \caption{Number of subdomains generated by Algorithm~\ref{alg:nonlinear} to achieve a target tolerance $\epsilon$ for the case $\mathcal{D}= \mathcal{D}_{\mathrm{II}}$ with $N=4,9,$ and $16$ .}
    \label{comp2D}
\end{figure}

Now we consider the two-parameter case, $\mathcal{D} = \mathcal{D}_{\mathrm{II}}$. We run Algorithm~\ref{alg:nonlinear} with an initial parameter $(0,0)$, a size $500$ of a random training set and $\zeta=5$ for the optimality constant $C_{\mathrm{opt}}$. We consider three cases $N=4,9$ and $16$. The number of subdomains $K$ against the tolerance $\epsilon$ is shown in Figure~\ref{comp2D}. Once again, the observed convergence rates are consistent with Theorem~\ref{th conv}.

\begin{figure}[!htbp]
    \centering
    \includegraphics[width=.6\textwidth,height=.5\textwidth]{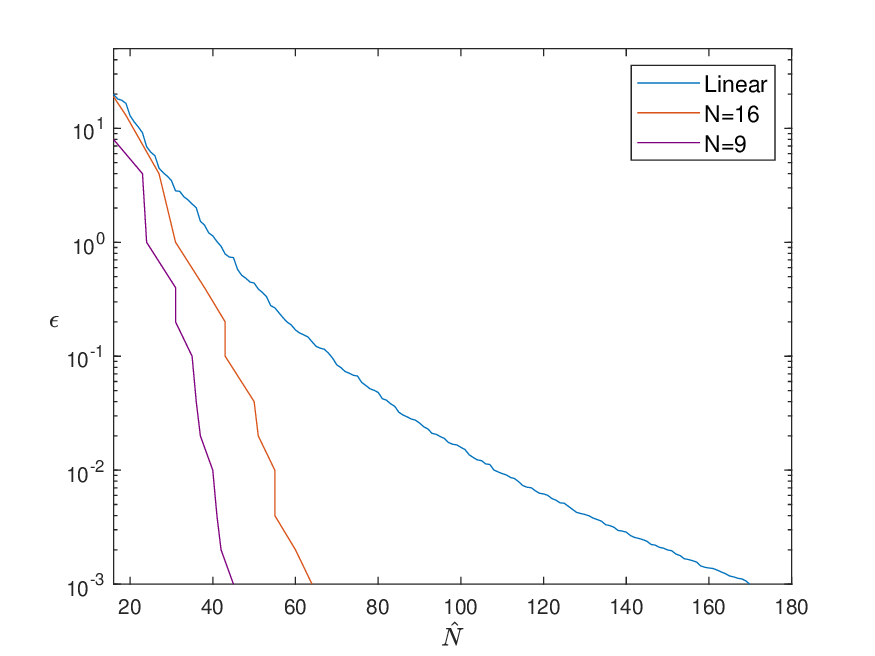}
    \caption{Convergence results for the linear model compared to the nonlinear models produced by  Algorithm~\ref{alg:nonlinear} for the case $\mathcal{D}= \mathcal{D}_{\mathrm{II}}$ with  $N=9$ and $N=16$.}
    \label{linear2D}
\end{figure}

For the online cost comparison,  we build a linear reduced model by executing Algorithm~\ref{alg:PODGreedy} under the configuration $N = \infty$, $\mu^{(1)} = (0,0)$, $\epsilon = 10^{-3}$, and a training set consisting of $5 \times 10^3$ samples. During each iteration, only a single POD mode is incorporated into the reduced basis. With this setup, the prescribed tolerance is met when the dimension of the reduced basis reaches $170$. The convergence behavior of the linear model, together with the nonlinear models produced by Algorithm~\ref{alg:nonlinear} for $N = 9$ and $N = 16$, is illustrated in Figure~\ref{linear2D}. As observed, the nonlinear approach yields a significant reduction in the online computational cost. In particular, for the case $N = 9$, this improvement is achieved with a reduced basis space whose dimension is approximately one quarter of that required by the linear model.

Finally, we examine the role of the optimality constant $C_{\mathrm{opt}}$. As discussed in Section~\ref{pract}, careful selection of $C_{\mathrm{opt}}$ is essential for balancing offline and online performance. To illustrate this, we construct two nonlinear models for $\mathcal{D}_{\mathrm{II}}$ with $N = 16$, initial parameter $(0,0)$, and a training set of size $500$. We consider two choices of $C_{\mathrm{opt}}$, corresponding to $\zeta = 5$ and $\zeta = 50$.

Figure~\ref{opt_comp}(a) displays the number of subdomains required to achieve a given tolerance for both cases. The expected convergence is achieved for both choices. It is evident that the choice $\zeta = 50$ leads to a higher offline computational cost as it necessitates a finer partition of the parameter domain due to the smaller number of POD modes retained at each iteration of the algorithm. Although this finer partition reduces $\hat{\ell}_k$ in \eqref{POD criterion}, thereby requiring more POD modes to satisfy the POD criterion, it also reduces the error introduced by the greedy reduction of the solution space on each subdomain. Since the POD is applied to the error trajectory rather than directly to the solution, these two effects compensate for each other, allowing the prescribed tolerance to be achieved while maintaining a smaller maximum RB-space dimension $\hat N$. This ultimately results in an improved online stage, as shown in Figure~\ref{opt_comp}(b).

Furthermore, we compare two nonlinear models constructed over $\mathcal{D}_{\mathrm{II}}$, both initialized at $(0,0)$, with a training set of size $500$ and tolerance $\epsilon = 10^{-3}$. The first model is built with $N = 9$ and $\zeta = 5$, while the second uses $N = 16$ and $\zeta = 50$. We observe that the online computational costs are comparable, as reflected by the maximum reduced basis dimensions of $50$ and $45$, respectively. However, the second configuration yields a substantially more efficient offline stage and significantly reduced storage requirements, with a library size of $466$ compared to $1641$ for the first model. This improvement is due to the larger value of $N=16$, which enhances the convergence rate according to Theorem~\ref{th conv}, thereby reducing the number of subdomains required to achieve the prescribed tolerance. Moreover, the larger value of $\zeta$ reduces the number of POD modes retained at each iteration of Algorithm~\ref{alg:nonlinear}, which contributes to the smaller values of $\hat{N}$ and thus improves the online efficiency. This clearly highlights the critical role of the optimality constant $C_{\mathrm{opt}}$ in balancing offline and online efficiency. Therefore, we recommend using reasonably large values of both $N$ and $\zeta$ to achieve an effective balance between offline and online computational costs while substantially reducing storage requirements.

\begin{figure}[!htbp]
    \centering
    \subfloat[]{
    \includegraphics[width=.48\textwidth]{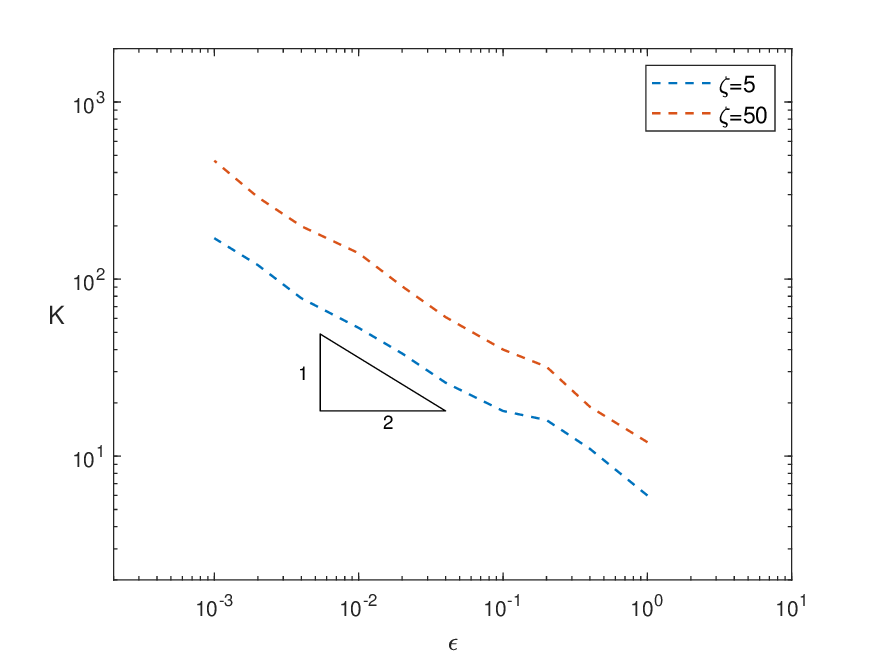}}
    \subfloat[]{
    \includegraphics[width=.48\textwidth]{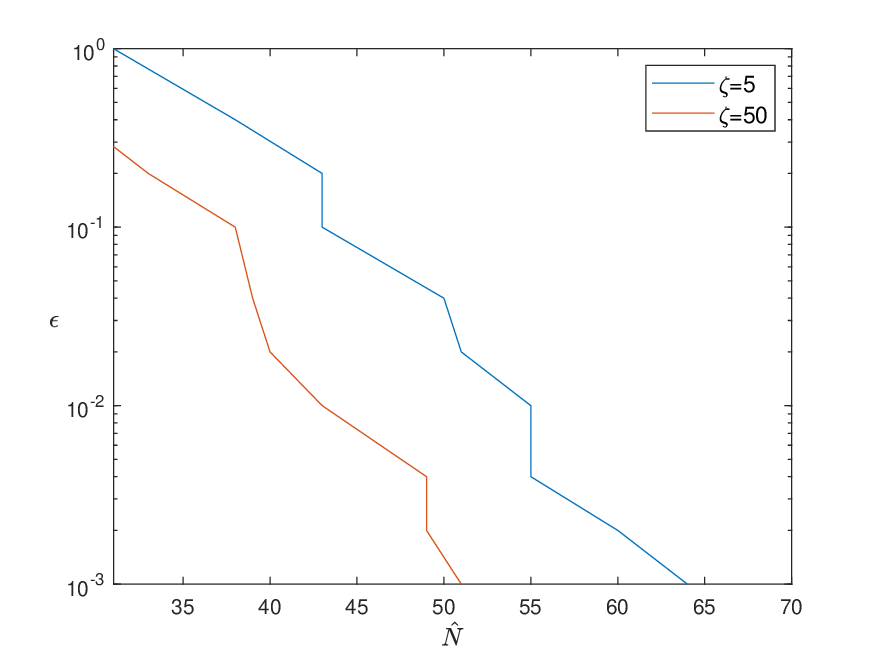}}
   \caption{Comparison of offline and online computational costs for the nonlinear model across different values of $C_{\mathrm{opt}}$, evaluated at $\zeta = 5$ and $\zeta = 50$.}
    \label{opt_comp}
\end{figure}

\bibliographystyle{acm}
\bibliography{ref}

@article{schwab2009space,
  title={Space-time adaptive wavelet methods for parabolic evolution problems},
  author={Schwab, Christoph and Stevenson, Rob},
  journal={Mathematics of Computation},
  volume={78},
  number={267},
  pages={1293--1318},
  year={2009}
}

@article{barrault2004empirical,
  title={An ‘empirical interpolation’ method: application to efficient reduced-basis discretization of partial differential equations},
  author={Barrault, Maxime and Maday, Yvon and Nguyen, Ngoc Cuong and Patera, Anthony T},
  journal={Comptes Rendus Mathematique},
  volume={339},
  number={9},
  pages={667--672},
  year={2004},
  publisher={Elsevier}
}

@article{grepl2007efficient,
  title={Efficient reduced-basis treatment of nonaffine and nonlinear partial differential equations},
  author={Grepl, Martin A and Maday, Yvon and Nguyen, Ngoc C and Patera, Anthony T},
  journal={ESAIM: Mathematical Modelling and Numerical Analysis},
  volume={41},
  number={3},
  pages={575--605},
  year={2007},
  publisher={EDP Sciences}
}

@article{haasdonk2008reduced,
  title={Reduced basis method for finite volume approximations of parametrized linear evolution equations},
  author={Haasdonk, Bernard and Ohlberger, Mario},
  journal={ESAIM: Mathematical Modelling and Numerical Analysis},
  volume={42},
  number={2},
  pages={277--302},
  year={2008},
  publisher={EDP Sciences}
}

@article{grepl2005posteriori,
  title={A posteriori error bounds for reduced-basis approximations of parametrized parabolic partial differential equations},
  author={Grepl, Martin A and Patera, Anthony T},
  journal={ESAIM: Mod{\'e}lisation math{\'e}matique et analyse num{\'e}rique},
  volume={39},
  number={1},
  pages={157--181},
  year={2005}
}

@book{hesthaven2016certified,
  title={Certified reduced basis methods for parametrized partial differential equations},
  author={Hesthaven, Jan S and Rozza, Gianluigi and Stamm, Benjamin and others},
  volume={590},
  year={2016},
  publisher={Springer}
}

@article{binev2011convergence,
  title={Convergence rates for greedy algorithms in reduced basis methods},
  author={Binev, Peter and Cohen, Albert and Dahmen, Wolfgang and DeVore, Ronald and Petrova, Guergana and Wojtaszczyk, Przemyslaw},
  journal={SIAM journal on mathematical analysis},
  volume={43},
  number={3},
  pages={1457--1472},
  year={2011},
  publisher={SIAM}
}

@article{buffa2012priori,
  title={A priori convergence of the Greedy algorithmfor the parametrized reduced basis method},
  author={Buffa, Annalisa and Maday, Yvon and Patera, Anthony T and Prud’Homme, Christophe and Turinici, Gabriel},
  journal={ESAIM: Mathematical modelling and numerical analysis},
  volume={46},
  number={3},
  pages={595--603},
  year={2012},
  publisher={EDP Sciences}
}

@article{knezevic2011reduced,
  title={Reduced basis approximation and a posteriori error estimation for the parametrized unsteady Boussinesq equations},
  author={Knezevic, David J and Nguyen, Ngoc-Cuong and Patera, Anthony T},
  journal={Mathematical Models and Methods in Applied Sciences},
  volume={21},
  number={07},
  pages={1415--1442},
  year={2011},
  publisher={World Scientific}
}

@article{nguyen2009reduced,
  title={Reduced basis approximation and a posteriori error estimation for the time-dependent viscous Burgers’ equation},
  author={Nguyen, Ngoc-Cuong and Rozza, Gianluigi and Patera, Anthony T},
  journal={Calcolo},
  volume={46},
  number={3},
  pages={157--185},
  year={2009},
  publisher={Springer}
}

@article{huynh2007successive,
  title={A successive constraint linear optimization method for lower bounds of parametric coercivity and inf--sup stability constants},
  author={Huynh, Dinh Bao Phuong and Rozza, Gianluigi and Sen, Sugata and Patera, Anthony T},
  journal={Comptes Rendus. Math{\'e}matique},
  volume={345},
  number={8},
  pages={473--478},
  year={2007}
}

@article{rozza2008reduced,
  title={Reduced basis approximation and a posteriori error estimation for affinely parametrized elliptic coercive partial differential equations: application to transport and continuum mechanics},
  author={Rozza, Gianluigi and Huynh, Dinh Bao Phuong and Patera, Anthony T},
  journal={Archives of computational methods in engineering},
  volume={15},
  number={3},
  pages={229--275},
  year={2008},
  publisher={Springer}
}

@article{barakat2026convergence,
  title = {Convergence analysis for a tree-based nonlinear reduced basis method},
  author = {Barakat, Mohamed and Guignard, Diane},
  journal = {IMA Journal of Numerical Analysis},
  pages = {drag035},
  year = {2026},
  publisher = {Oxford University Press},
}

@article{almroth1978automatic,
  title={Automatic choice of global shape functions in structural analysis},
  author={Almroth, Bo O and Stern, Perry and Brogan, Frank A},
  journal={Aiaa Journal},
  volume={16},
  number={5},
  pages={525--528},
  year={1978}
}

@article{noor1980reduced,
  title={Reduced basis technique for nonlinear analysis of structures},
  author={Noor, Ahmed K and Peters, Jeanne M},
  journal={Aiaa journal},
  volume={18},
  number={4},
  pages={455--462},
  year={1980}
}

@article{porsching1985estimation,
  title={Estimation of the error in the reduced basis method solution of nonlinear equations},
  author={Porsching, TA804937},
  journal={Mathematics of Computation},
  volume={45},
  number={172},
  pages={487--496},
  year={1985}
}

@article{rheinboldt1992theory,
  title={On the theory and error estimation of the reduced basis method for multi-paramete problems},
  author={Rheinboldt, Werner C},
  journal={Nonlinear Analysis: Theory, Methods and Applications},
  volume={21},
  pages={849--858},
  year={1993}
}

@article{boyaval2008reduced,
  title={Reduced-basis approach for homogenization beyond the periodic setting},
  author={Boyaval, S{\'e}bastien},
  journal={Multiscale Modeling \& Simulation},
  volume={7},
  number={1},
  pages={466--494},
  year={2008},
  publisher={SIAM}
}

@article{boyaval2009reduced,
  title={A reduced basis approach for variational problems with stochastic parameters: Application to heat conduction with variable Robin coefficient},
  author={Boyaval, S{\'e}bastien and Le Bris, Claude and Maday, Yvon and Nguyen, Ngoc Cuong and Patera, Anthony T},
  journal={Computer Methods in Applied Mechanics and Engineering},
  volume={198},
  number={41-44},
  pages={3187--3206},
  year={2009},
  publisher={Elsevier}
}

@article{nguyen2010reduced,
  title={Reduced basis approximation and a posteriori error estimation for parametrized parabolic {PDE}s: application to real-time Bayesian parameter estimation},
  author={Nguyen, NC and Rozza, Gianluigi and Huynh, DB Phuong and Patera, Anthony T},
  journal={Large-Scale Inverse Problems and Quantification of Uncertainty},
  pages={151--177},
  year={2010},
  publisher={Wiley Online Library}
}

@article{amsallem2012nonlinear,
  title={Nonlinear model order reduction based on local reduced-order bases},
  author={Amsallem, David and Zahr, Matthew J and Farhat, Charbel},
  journal={International Journal for Numerical Methods in Engineering},
  volume={92},
  number={10},
  pages={891--916},
  year={2012},
  publisher={Wiley Online Library}
}

@inproceedings{washabaugh2012nonlinear,
  title={Nonlinear model reduction for CFD problems using local reduced-order bases},
  author={Washabaugh, Kyle and Amsallem, David and Zahr, Matthew and Farhat, Charbel},
  booktitle={42nd AIAA Fluid Dynamics Conference and Exhibit},
  pages={2686},
  year={2012}
}

@article{amsallem2015fast,
  title={Fast local reduced basis updates for the efficient reduction of nonlinear systems with hyper-reduction},
  author={Amsallem, David and Zahr, Matthew J and Washabaugh, Kyle},
  journal={Advances in Computational Mathematics},
  volume={41},
  number={5},
  pages={1187--1230},
  year={2015},
  publisher={Springer}
}

@article{peherstorfer2014localized,
  title={Localized discrete empirical interpolation method},
  author={Peherstorfer, Benjamin and Butnaru, Daniel and Willcox, Karen and Bungartz, Hans-Joachim},
  journal={SIAM Journal on Scientific Computing},
  volume={36},
  number={1},
  pages={A168--A192},
  year={2014},
  publisher={SIAM}
}

@article{amsallem2016pebl,
  title={{PEBL-ROM}: Projection-error based local reduced-order models},
  author={Amsallem, David and Haasdonk, Bernard},
  journal={Advanced Modeling and Simulation in Engineering Sciences},
  volume={3},
  number={1},
  pages={6},
  year={2016},
  publisher={Springer}
}

@inproceedings{haasdonk2008adaptive,
  title={Adaptive basis enrichment for the reduced basis method applied to finite volume schemes},
  author={Haasdonk, Bernard and Ohlberger, Mario},
  booktitle={Proc. 5th International Symposium on Finite Volumes for Complex Applications},
  pages={471--478},
  year={2008}
}

@article{haasdonk2011training,
  title={A training set and multiple bases generation approach for parameterized model reduction based on adaptive grids in parameter space},
  author={Haasdonk, Bernard and Dihlmann, Markus and Ohlberger, Mario},
  journal={Mathematical and Computer Modelling of Dynamical Systems},
  volume={17},
  number={4},
  pages={423--442},
  year={2011},
  publisher={Taylor \& Francis}
}

@article{eftang2010hp,
  title={An ``hp''  certified reduced basis method for parametrized elliptic partial differential equations},
  author={Eftang, Jens L and Patera, Anthony T and R{\o}nquist, Einar M},
  journal={SIAM Journal on Scientific Computing},
  volume={32},
  number={6},
  pages={3170--3200},
  year={2010},
  publisher={SIAM}
}

@incollection{guignard2024tree,
  title={Tree-based nonlinear reduced modeling},
  author={Guignard, Diane and Mula, Olga},
  booktitle={Multiscale, Nonlinear and Adaptive Approximation II},
  pages={267--298},
  year={2024},
  publisher={Springer}
}

@article{ohlberger2013reduced,
  title={Reduced basis methods: Success, limitations and future challenges},
  author={Ohlberger, Mario and Rave, Stephan},
  journal={Proceedings of the Conference Algoritmy},
  pages={1--12},
  year={2013}
}

@article{ohlberger2016nonlinear,
  title={Nonlinear Approximation Methods for High-Dimensional Parametric PDEs},
  author={Ohlberger, Mario and Rave, Stephan},
  journal={Model Reduction of Parametrized Systems},
  pages={95--116},
  year={2017},
  publisher={Springer}
}

@article{greif2019tensor,
  title={A Posteriori Error Bounds for the Reduced Basis Method for Transport Dominated Problems},
  author={Greif, Constantin and Urban, Karsten},
  journal={ESAIM: Mathematical Modelling and Numerical Analysis},
  volume={53},
  number={6},
  pages={1965--1994},
  year={2019}
}

@article{peherstorfer2022nonlinear,
  title={Model Reduction for Transport-Dominated Problems},
  author={Peherstorfer, Benjamin},
  journal={Acta Numerica},
  volume={31},
  pages={507--595},
  year={2022}
}

@article{quarteroni2011certified,
  title={Certified reduced basis approximation for parametrized partial differential equations and applications},
  author={Quarteroni, Alfio and Rozza, Gianluigi and Manzoni, Andrea},
  journal={Journal of Mathematics in Industry},
  volume={1},
  number={1},
  pages={3},
  year={2011},
  publisher={Springer}
}

@article{dihlmann2011model,
  title={Model reduction of parametrized evolution problems using the reduced basis method with adaptive time-partitioning},
  author={Dihlmann, Markus and Drohmann, Martin and Haasdonk, Bernard},
  journal={Proc. of ADMOS},
  volume={2011},
  pages={64},
  year={2011}
}

@inproceedings{drohmann2011adaptive,
  title={Adaptive reduced basis methods for nonlinear convection--diffusion equations},
  author={Drohmann, Martin and Haasdonk, Bernard and Ohlberger, Mario},
  booktitle={Finite Volumes for Complex Applications VI Problems \& Perspectives: FVCA 6, International Symposium, Prague, June 6-10, 2011},
  pages={369--377},
  year={2011},
  organization={Springer}
}

@article{copeland2022reduced,
  title={Reduced order models for Lagrangian hydrodynamics},
  author={Copeland, Dylan Matthew and Cheung, Siu Wun and Huynh, Kevin and Choi, Youngsoo},
  journal={Computer Methods in Applied Mechanics and Engineering},
  volume={388},
  pages={114259},
  year={2022},
  publisher={Elsevier}
}

@article{shimizu2021windowed,
  title={Windowed space--time least-squares Petrov--Galerkin model order reduction for nonlinear dynamical systems},
  author={Shimizu, Yukiko S and Parish, Eric J},
  journal={Computer Methods in Applied Mechanics and Engineering},
  volume={386},
  pages={114050},
  year={2021},
  publisher={Elsevier}
}

@article{hesthaven2026nonlinear,
  title={Nonlinear model reduction for transport-dominated problems},
  author={Hesthaven, Jan S and Peherstorfer, Benjamin and Unger, Benjamin},
  journal={arXiv preprint arXiv:2602.01397},
  year={2026}
}

@article{eftang2011hp2,
  title={An hp certified reduced basis method for parametrized parabolic partial differential equations},
  author={Eftang, Jens L and Knezevic, David J and Patera, Anthony T},
  journal={Mathematical and Computer Modelling of Dynamical Systems},
  volume={17},
  number={4},
  pages={395--422},
  year={2011},
  publisher={Taylor \& Francis}
}

@article{maday2002priori,
  title={A priori convergence theory for reduced-basis approximations of single-parameter elliptic partial differential equations},
  author={Maday, Yvon and Patera, Anthony T and Turinici, Gabriel},
  journal={Journal of Scientific Computing},
  volume={17},
  number={1},
  pages={437--446},
  year={2002},
  publisher={Springer}
}
\end{document}